\begin{document}
\title[Eigenvarieties for non-cuspidal modular forms]{Eigenvarieties for non-cuspidal modular forms over certain PEL Shimura varieties}


\author{Riccardo Brasca}
\email{\href{mailto:riccardo.brasca@imj-prg.fr}{riccardo.brasca@imj-prg.fr}}
\urladdr{\url{http://www.imj-prg.fr/~riccardo.brasca/}}
\address{Institut de Math\'ematiques de Jussieu - Paris Rive Gauche\\
Universit\'e Paris Diderot\\
Paris\\
France}
\thanks{The first author was partially supported by the ANR PerCoLaTor: ANR-14-CE25-0002-01.}

\author{Giovanni Rosso}
\email{\href{mailto:giovanni.rosso@concordia.ca}{giovanni.rosso@concordia.ca}}
\urladdr{\url{https://sites.google.com/site/gvnros/}}
\address{Department of Mathematics and Statistics, Concordia University, Montr\'eal, Quebec, Canada}
\thanks{The second author was partially founded by a FWO travel grant V4.260.14N and the Herchel Smith Postdoctoral Fellowship}
\subjclass[2010]{Primary: 11F55; Secondary: 11F33}

\keywords{eigenvarieties, $p$-adic modular forms, PEL-type Shimura varieties}

\begin{abstract}
Generalising the recent method of Andreatta, Iovita, and Pilloni for cuspidal forms, we construct an eigenvariety for symplectic and unitary groups that parametrises systems of eigenvalues of overconvergent and locally analytic $p$-adic automorphic forms. This is achieved by gluing some intermediates eigenvarieties of a fixed `degree of cuspidality'. The dimension of these eigenvarieties is explicit and depends on the degree of cuspidality, it is maximal for cuspidal forms and it is $1$ for forms that are `not cuspidal at all'. Under mild assumption, we are able to prove a conjecture of Urban about the dimension of the irreducible components of Hansen's eigenvariety in the case of the group $\GSp_4$ over $\Q$.
\end{abstract}

\maketitle

\section*{Introduction}
Let $p$ be a fixed prime number. Since the seminal work of Hida on congruences modulo $p$  between ordinary modular forms, many progress have been made in the study of $p$-adic families of automorphic forms. Hida's techniques have been successfully adapted in many different settings to construct families of ordinary automorphic eigenforms. See for example \cite{TilUrb,Mauger,hida_control}.

On the other hand, the theory for forms which are of finite slope for a certain $\U_p$-operator but not ordinary has viewed less advancements since the foundational work of Coleman \cite{cole_fam}. Recently, Urban in \cite{urban_eigen} and Hansen in \cite{Hansen} (generalising ideas of Ash and Stevens in \cite{AshSt}) have developed a very general theory for families of eigenforms. Their approach is mainly cohomological.

Contrary to Hida's theory, there was no `coherent' approach to eigenvarieties until the recent work of Pilloni \cite{vincent} and Andreatta, Iovita, and Stevens \cite{over}. Their approach has been generalised first to Siegel modular forms in \cite{AIP} and then to general PEL Shimura varieties with non-empty ordinary locus in \cite{pel}. In these papers, the authors deal only with eigenvarieties for cuspidal forms and do not consider families of Eisenstein series. Such families of Eisenstein series have been proven to be very useful in many arithmetic applications, starting from the seminal work of Ribet to more recent applications to Iwasawa theory.

The reason why \cite{AIP} and \cite{pel} only deal with cuspidal forms is that the authors, to build the eigenvariety, use a general machinery due to Buzzard in \cite{buzz_eigen}. This construction has a technical hypothesis (namely the `projectivity' of the space of forms over the weight space, see Subsection~\ref{subsec: eigen machinery} for more details) that they can only show for the space of cuspidal forms.
On the opposite hand, Kisin--Lai \cite{KisinLai} and others constructed long time ago a one-dimensional eigenvarieties using families of Eisenstein series. These two approaches seem orthogonal.

In the paper at hand we give a unitary way of understanding the two constructions: generalising the method of \cite{AIP} and \cite{pel} we construct eigenvarieties for not necessarily cuspidal $p$-adic modular forms for certain Shimura varieties of PEL type. The key point is that in general the space of all forms is not projective, so one can not apply Buzzard's construction directly. We show that, adding some restrictions on the weights, the space of modular forms of a given `degree of cuspidality' (see below for a precise definition) is projective, and in particular we can apply Buzzard's machinery to construct several equidimensional eigenvarieties. We then explain what can be done to glue these eigenvarieties together.

Let us now state more precisely the results of this paper. In order to simplify the notation we consider in this introduction only the Siegel case (i.e. we consider the symplectic group over $\Q$), but our results hold true also in the Hilbert-Siegel case and (with some additional assumptions) in the unitary case. Let $p > 2$ be an odd prime and let $g \geq 2$ be an integer. Let $\mathfrak S$ be the formal Siegel variety of some fixed level outside $p$ (to be precise we should work with the Shimura variety of Iwahoric level at $p$, but we will ignore this issue in the introduction). We write $\mathfrak S^{\rig}$ for its rigid analytic fiber and, if $v \in \Q_{\geq 0}$, we denote by $\mathfrak S(v)^{\rig}$ the strict neighborhood of the ordinary locus defined by the condition that the Hasse invariant has valuation smaller or equal than $v$. We denote with $\mc W_g$ the weight space, that is a rigid analytic space isomorphic to a disjoint union of $g$-dimensional open balls of radius $1$. We will work with certain subspaces $\mc W(w)_g \subseteq \mc W_g$, that parametrise $w$-analytic weights, where $w$ is a rational. Let $\mc U = \Spm(A) \subset \mc W(w)_g$ be an affinoid, with associated universal character $\chi_{\mc U}^{\un}$. One of the main results of \cite{AIP} is the construction of a sheaf $\underline \omega_{v,w}^{\dagger \chi_{\mc U}^{\un}}$ on $\mathfrak S(v)^{\rig} \times \mc U$ (where $v$ is small enough) that interpolates the usual modular sheaves given by integral weights in $\mc U$. The sheaf extends to a (fixed) toroidal compactification $\mathfrak S(v)^{\rig,\tor}$ and its global sections $\M_{\mc U}$ are by definition the families of ($v$-overconvergent and $w$-locally analytic) Siegel forms of weight $\chi_{\mc U}^{\un}$. There is also a Hecke algebra that acts on $\M_{\mc U}$, including a completely continuous operator $\U_p$.

Unfortunately, it turns out that $\M_{\mc U}$ is not projective in Buzzard's sense, so one can not apply the abstract machinery of \cite{buzz_eigen} to build the eigenvariety. The solution of \cite{AIP} is to consider the space $\M^0_{\mc U} \subset \M_{\mc U}$ of \emph{cuspidal forms}, i.e. sections of $\underline \omega_{v,w}^{\dagger \chi_{\mc U}^{\un}}(-D)$, where $D$ is the boundary of $\mathfrak S(v)^{\rig,\tor} \times \mc U$. They are able to prove that $\M^0_{\mc U}$ is projective and hence they obtain a ($g$-dimensional) eigenvariety. To generalise this result to the non-cuspidal case we need first of all to understand why the module $\M_{\mc U}$ is not projective. To do this it is convenient to work with the minimal compactification $\mathfrak S(v)^{\rig,\ast}$. Let $\pi \colon \mathfrak S(v)^{\rig,\tor} \times \mc U \to \mathfrak S(v)^{\rig,\ast} \times \mc U$ be the natural morphism and let $\mc J^0 \subset \mc O_{\mathfrak S(v)^{\rig,\tor} \times \mc U}$ be the sheaf of ideals corresponding to the boundary of $\mathfrak S(v)^{\rig,\tor} \times \mc U$. By definition we have
\[
\M^0_{\mc U} = \Homol^0(\mathfrak S(v)^{\rig,\tor} \times \mc U, \underline \omega_{v,w}^{\dagger \chi_{\mc U}^{\un}} \otimes \mc J^0).
\]
The boundary of $\mathfrak S(v)^{\rig,\ast}$ is given by Siegel varieties of genus smaller than $g$, and it has in particular a natural stratification given by the union of the varieties of genus smaller than $g-s$, for a given $0 \leq s \leq g-1$. Let $\mc I^s \subset \mc O_{\mathfrak S(v)^{\rig,\ast} \times \mc U}$ be the corresponding sheaf of ideals and let $\mc J^s$ be $\pi^\ast \mc I^s$. Looking at global sections of $\underline \omega_{v,w}^{\dagger \chi_{\mc U}^{\un}} \otimes \mc J^s$ we obtain a filtration
\[
\M^0_{\mc U} \subset \M^1_{\mc U} \subset \cdots \subset \M^g_{\mc U}= \M_{\mc U}.
\]
For example $\M^1_{\mc U}$ is the space of forms that are not necessarily cuspidal but vanish on all the components of the boundary corresponding to Siegel varieties of genus strictly smaller than $g-1$. We define the corank of a given form $f$, denoted $\cork(f)$, by
\[
\cork(f) = \min\{q \mbox{ such that } f \in \M^q_{\mc U} \}.
\]
Let now $\chi=(\chi_i)_{i=1}^g \in \mc W(w)_g$ be a $p$-adic weight. We define the corank of $\chi$, denoted $\cork(\chi)$, by
\[
\cork(\chi)= \max\{s \mbox{ such that } \chi_g = \chi_{g-1} = \cdots = \chi_{g-s + 1} \}.
\]
We obtain in this way the closed subspace $\mc W(w)_g^s \subset \mc W(w)_g$ given by weights of corank at least $s$. The interest of the corank is the following theorem, proved in \cite{WeiVek}.
\begin{teono}[{\cite[Satz~2]{WeiVek}}]
Let $f \neq 0$ be a classical modular forms of integral weight $k$. Then we have
\[
\cork(f) \leq \cork(k).
\]
\end{teono}
For example, this implies that if we have a classical form $f \neq 0$ that is `completely not cuspidal', in the sense that $f \not \in \M^{g-1}_{\mc U}$, then the weight of $f$ must be parallel. Since classical points are dense in the eigenvariety (a fact that follows from the classicality results of \cite{AIP} and \cite{pel}) we see that we can not have a $g$-dimensional eigenvariety for $\M_{\mc U}$ and in particular $\M_{\mc U}$ can not be a projective $A$-module.

Fix now an integer $q > 0$ and let $\mc U = \Spm(A) \subset \mc W(w)_g^q$ be an admissible open. There are no difficulties to define the sheaf $\underline \omega_{v,w}^{\dagger \chi_{\mc U}^{\un}}$, so we obtain the space $\M^q_{\mc U}$ of families of modular forms of corank at most $q$ with weights in $\mc U$. (The case $q=0$ is the case of cuspidal forms and it is done in \cite{AIP}.) Note that $\mc U$ is now $g-q+1$-dimensional so that $\underline \omega_{v,w}^{\dagger \chi_{\mc U}^{\un}}$ parametrizes families of modular forms in $g-q+1$ variables). One of our main results is the following:
\begin{teono}
The $A$-module $\M^q_{\mc U}$ is projective, so we have a $g-q+1$-dimensional eigenvariety for Siegel eigenforms of corank at most $q$.
\end{teono}

The strategy to prove the theorem is to use the Siegel morphism, that we show to be surjective. The expert reader will recognize that the proof of this theorem is heavily inspired by Hida's work, especially \cite{hida_control}, and its generalisations to non-cuspidal setting \cite{skinn_urb,urban}. 

There are various natural morphisms between the eigenvarieties we construct, for weights of different corank, and the natural question that arises is to glue them. We dot not know if this is possible in general, but we are able to glue their reduced part into a single eigenvariety, proving the following theorem.
\begin{teono}
The reduced eigenvarieties $\mathcal{E}_{g,q}^{q,\mr{red}}$, for $q=0,\ldots, g $, glue into a (non-equidimensional) eigenvariety $\mathcal{E}_{g}$ over $\mc W_g$.
\end{teono}
The strategy to prove the theorem is to show that all our (reduced) eigenvarieties are subspaces of a general eigenvariety constructed, via cohomological methods, by Hansen in \cite{Hansen}. (Hansen's eigenvariety is more general but it is not directly related to families of `true forms'.)

We would like now to sketch the strategy of the proof of the projectivity of $\M^q_{\mc U}$. Let $\mathfrak{S}_{g-1}^{\rig,\ast}$ be the minimal compactification of a component of the boundary of $\mathfrak{S}^{\rig,\ast}$ corresponding to a Siegel variety of genus $g-1$. Let $q>1$, let $\mc W_g \to \mc W_{g-1}$ be the morphism that forgets the last component of the weight, and  $\mc V$ the image of $\mc U$ under this morphism. (Note that as $q>1$ we have that $\mc U$ and $\mc V$ are isomorphic. In the paper we also treat the case $q=1$ which is slightly different.) We show that the pullback to $\mathfrak{S}_{g-1}^{\rig,\ast}$ of a family of modular forms with weights in $\mc U \subset \mc W_g^q$ and corank at most $q$ is a modular form of weight in $\mc V$ of corank at most $q-1$, see Proposition~\ref{prop: identification sheaves}. This is one of the key arguments in the paper: it is proved via Fourier--Jacobi expansion and representation theory for the group $\GL_q$. We want to stress that both assumptions, on the weights and on the corank of the forms, are crucial for this result. Taking the pullback of a form we now get the so-called Siegel morphism, and we prove that there is an exact sequence
\[
0 \to \M^0_{\mc U} \to \M^s_{\mc U} \to \bigoplus \M^{s-1}_{\mc V} \to 0
\]
where the direct sum is over all the cusps of genus $g-1$. Since we already know that $\M^0_{\mc U}$ is projective, we conclude by induction.

We think that the surjectivity of the Siegel morphism for families is in itself a very interesting result, especially because the same is not true in the classical complex setting. 
Moreover, as we have already pointed out, this kind of results have been heavily used in several proofs of Main Conjectures. We believe that our result is very likely to be useful to prove instances of non-ordinary Main Conjectures (as stated in \cite{Benois, Pott}), generalising known results in the ordinary setting, see for example \cite{skinn_urb,urban}.


One advantage of this construction is that we know explicitly the dimension of the eigenvarieties we obtain; if one requires the very natural condition that classical points are dense, then the dimension of our varieties is the maximal that one could allow.  Moreover, in the case of the group $\GSp_4/\Q$ and full level, we are able to prove (under a mild hypothesis) a conjecture of Urban (\cite[Conjecture~5.7.3]{urban_eigen}) about the expected dimension of the irreducible components of these non-equidimensional eigenvariety.

The paper is organized as follows. In Section~\ref{sec: analytic} we study the situation over $\C$. Even if, strictly speaking, we do not need the results over the complex numbers, we find it convenient and instructive to analyse the situation. All the basic ideas of the paper (except one cohomological computation) are already visible in this section. We introduce the Shimura varieties we will work with and we prove a theorem which bounds from above the corank of an automorphic form with the corank of its weight, generalising the main results of \cite{WeiVek} to PEL Shimura varieties. We introduce in great generality the Fourier--Jacobi expansion which will allow us to study the Siegel morphism. In Section~\ref{sec: p-adic Section} we develop the theory of $p$-adic modular forms. We introduce the spaces of modular forms we are interested in and the $p$-adic Siegel morphism, showing that it is surjective. This uses the vanishing of cohomology of a small Banach sheaf in the sense of \cite{AIP}, see Proposition~\ref{prop: coho banach 0}, which is an interesting result on its own. In Section~\ref{sec: eigen} we recall Buzzard's machinery and we actually build the eigenvarieties. We finally study the relations between them when varying $g$ and $q$, explain the gluing process, and prove Urban's conjecture for $\GSp_4/\Q$.
\subsection*{Acknowledgments} This work began while GR was a PhD student at Universit\'e Paris 13 and KU Leuven, to which he is very grateful. The main idea of the paper originated from the {\it groupe de travail} on Hida theory and especially the reading of \cite{PilHida}; GR would like to thank all its participants and in particular Jacques Tilouine. RB would like to thank Fabrizio Andreatta, Adrian Iovita, Vincent Pilloni, Benoît Stroh, and Alberto Vezzani for several useful conversations. This work has greatly benefited from an excellent long stay of GR at Columbia University and several discussions with David Hansen, Zheng Liu, and Eric Urban. We would also like to thank Christian Johansson for addressing us to \cite{LanPolo}, and an anonymous referee for pointing out a problem with a previous version of the paper.

\section{Analytic section} \label{sec: analytic}
The aim of this section is the proof of Theorem \ref{Weiss2} which generalises a result of Weissesauer \cite{WeiVek} giving necessary conditions on the weight of an automorphic form for it to be of a given `degree of cuspidality'. This result is at the basis of the philosophy of this paper, which roughly speaking states that non-cuspidal eigenvarieties must be of  smaller dimension than the weight space. The section starts recalling some notation on Shimura varieties of type A and C and the corresponding automorphic forms. We conclude studying the Siegel morphism; in particular, we give a sheaf theoretic version of it (see Proposition \ref{prop: SiegelSheaves}) whose $p$-adic avatar will be the key ingredient for the construction of eigenvarieties.
\subsection{Symplectic and unitary groups} 
\subsubsection*{Symplectic case}\label{IntroSymp}
Let $F_0$ be a totally real number field and $\mc O_{F_0}$ its ring of integers. For an integer $ a \geq 1$ we let $G$ be the algebraic group over $\mc{O}_{F_0}$ whose $A$-points are
\begin{align*}
\GSp_{2a/F_0}(A)=\set{g \in \GL_{2a}(A) \vert \phantom{d}^tg \iota_a g = \nu(g) \iota_a,\; \nu(g)},
\end{align*}
where $\iota_a$ is the $2a \times 2a$ orthogonal matrix \begin{align*}
\iota_a = {\left( \begin{array}{cc}
0 & -w_a \\
w_a & 0
\end{array}\right)},
\end{align*}
being $w_a$ the longest Weyl element ({i.e.} the anti-diagonal matrix of size $a \times a$). 

This can be seen as the space of transformation of a rank $2a$ lattice $\Lambda_a$ over $\mc O_{F_0}$ which preserve, up to a scalar, the symplectic form defined by $\iota_a$. We shall write $V_a$ for the corresponding vector space over $F_0$ and $\set{e_1,\ldots,e_{2a}}$ for the standard symplectic basis. 

We shall call $\nu$ the factor of similitude and we shall denote its kernel by $\SP_{2a}$.

We shall be interested in the maximal parabolic subgroups of $ \GSp_{2a}$. For $0 \leq s \leq a$ let $\Lambda_{a,s}$ be the subspace of $\Lambda_a$ generated by $\set{e_{1},\ldots,e_{s}}$ (if $s=0$, we mean that the set is empty) and $P_{a,s}$ the parabolic of $\GSp_{2a}$ preserving $\Lambda_{a,s}$.
We have that the Levi of $P_{a,s}$ is isomorphic to $\GSp(\set{e_{s+1},\ldots,e_{a-s},e_{a+1},\ldots,e_{2a-s}}) \times \GL(\Lambda_{a,s}) $. Explicitly, we can see this Levi in $ \GSp_{2a}$ as 
\begin{align*}
\left( 
\begin{array}{cccc}
g & 0 & 0 & 0 \\
0 & A & B & 0 \\
0 & C & D & 0 \\
0 & 0 & 0 & \nu(g')w_s^tg^{-1}w_s
\end{array}
\right), \;\;\; g'=\left( 
\begin{array}{cc}
A & B \\
C & D 
\end{array}
\right) \in \GSp_{2a-2s/F_0},\; g \in \GL_{s/F_0}.
\end{align*}
Let $N_{a,s}$ be the unipotent radical of $P_{a,s}$; we shall be interested in its center $Z(N_{a,s})$ which can be explicitly written as 
\begin{align*}
\left( 
\begin{array}{cccc}
1 & 0 & 0 & n \\
0 & 1 & 0 & 0 \\
0 & 0 & 1 & 0 \\
0 & 0 & 0 & 1 
\end{array}
\right), \;\;\;\; w_s n w_s= {^t n}, n \in M_s.
\end{align*}
We have an an action of $\GL_s$ on $Z(N_{a,s})$ induced by conjugation inside $ \GSp_{2a}$: $g.n=gnw_s^tgw_s$.

If we want to specify that we are in the situation considered in this Subsection we will say `in the symplectic case'. It is also called the Hilbert-Siegel case and it corresponds to case (C) of \cite{pel}.

We define another group; let $G$ be the algebraic group over $\Z$ whose $A$-point are
\begin{align*}
G(A)=\set{g \in \GL_{2a}(A \otimes_{\Z} \mc O_{F_0} ) \vert \phantom{d}^tg \iota_a g = \nu(g) \iota_a,\; \nu(g) \in A}.
\end{align*}
It differs from $\mr{Res}_{\mc O_{F_0}/\Z}\GSp_{2a}$ for the condition on the rational multiplier and it is important because it is associated with a Shimura variety. Its maximal parabolic subgroups and their Levi and unipotent are defined as for $\GSp_{2a}$ with the extra condition in the multiplier of the symplectic part of the Levi. By a slight abuse of notation, we shall denote the corresponding objects by the same symbol.
\subsubsection*{Unitary case}
Let $F_0$ be a totally real number field and $F$ a totally imaginary quadratic extension of $F$; let $\mc O_{F_0}$ (resp. $\mc O_{F}$) the ring of integers of $F_0$ (resp. $F$). Take two non-negative integers $b \geq a$.   We define the skew-Hermitian matrix 
\begin{align*}
\iota_{a,b} = {\left( \begin{array}{ccc}
0 & 0 & -w_a \\
0 & \varsigma w_{b-a} & 0 \\
w_a & 0 & 0
\end{array}\right)},
\end{align*}
where $\varsigma$ is a totally imaginary element of $F$.

We consider the unitary group $\mr{GU}({b,a})$ over $\mc{O}_{F_0}$ whose $A$-points are 
\begin{align*}
\mr{GU}({b,a})(A)=\set{g \in \GL_{a+b}(A \otimes_{\mc{O}_{F_0}} \mc O_{F} ) \vert  g^\ast \iota_{b,a} g = \nu(g) \iota_{b,a}, \nu(g) \in A},
\end{align*}
where $g^\ast=c(g^t)$, for $c$ the complex conjugation of $F$ over $F_0$. It is a smooth algebraic group over $\Z_{N}$, for a suitable integer $N$. By a slight abuse of notation, we shall sometime call this group $\GU(b,a)_{F/F_0}$. We shall call $\nu$ the factor of similitude and we shall denote its kernel by $\mr U({b,a})$.

Let $\Omega_{b-a}$ be a $b-a$-dimensional lattice over $\mc O_{F}$ corresponding to the skew-Hermitian matrix $\varsigma\mr{Id}_{b-a}$. Denote a integral basis of it by $ \set{w_i}_{i=1}^{b-a}$. Let $\Xi_a$ and $\Upsilon_a$ be two $\mc O_F$ lattices with basis $\set{x_i}_{i=1}^a$ and $\set{y_i}_{i=1}^a$. We let $\Lambda_{a,b} =\Xi_a \oplus \Omega_{b-a} \oplus \Upsilon_a$ and $V_{b,a}=X_a \oplus W_{b-a} \oplus Y_a$ the corresponding $F$ vector space.
We now classify the parabolic of $\GU({b,a})$. Let $0\leq s \leq a$ and let $\Lambda_{b,a,s}$ be the sub-lattice of $\Lambda_{b,a}$ spanned by $\set{y_i}_{i=a-s+1}^a$. (If $s=0$, we assume this set to be empty.) We denote by $P_{b,a,s}$ the parabolic of $\GU(b,a)$ stabilising $\Lambda_{b,a,s}$. The Levi subgroup of $P_{b,a,s}$ can be identified with $\GU(\set{x_1,\ldots,x_{a-s},w_1,\ldots,w_{b-a},y_1,\ldots, y_{a-s}}) \times \GL(\Lambda_{b,a,s})$. The Levi of $P_{b,a,s}$, seen as a subgroup of $\GU(b,a)$,  can be described as 
\begin{align*}
\left( 
\begin{array}{ccc}
\nu(h)w_s (g^{-1})^\ast w_s  & 0 & 0 \\
0 & h & 0 \\
0 & 0 & g 
\end{array}
\right), \;\;\;\; h \in \GU({b-s,a-s}), g \in \GL_{s/F}.
\end{align*}
Note that $\mathbb{G}_m(F)$ embeds in $\GU({b,a})$ and $\nu(\mathbb{G}_m)=\mr N_{F/F_0}$.
We shall denote by $N_{a,s}$ the unipotent of $P_{b,a,s}$ and by $Z(N_{a,s})$ its center. This center can be explicitly written as 
\begin{align*}
\left( 
\begin{array}{ccc}
1 & 0 & n \\
0 & 1 & 0  \\
0 & 0 & 1 
\end{array}
\right), \;\;\;\; n=w_sn^\ast w_s, n \in M_s.
\end{align*}
We have an an action of $\GL_s$ on $Z(N_{a,s})$ induced by conjugation inside $\GU({a,b})$: $g.n= w_s (g^{-1})^\ast w_sng^{-1}$.

In this case we denote by $G$ the algebraic group over $\Z$
\begin{align*}
G(A)=\set{g \in \GL_{a+b}(A \otimes_{\Z} \mc O_{F} ) \vert  g^\ast \iota_{b,a} g = \nu(g) \iota_{b,a}, \nu(g) \in A}.
\end{align*}
\begin{remark}
We are using a quite uncommon definition of symplectic and unitary groups (using the longest Weyl element rather than the identity matrix) but this makes the Hodge-Tate map equivariant for the action of $\GL_b \times \GL_a$. Moreover the Borel subgroups are always upper triangular.
\end{remark}
If we want to specify that we are in the situation considered in this Subsection we will say `in the unitary case'. It corresponds to case (A) of \cite{pel}.
\subsection{Shimura varieties and their compactification}\label{Shimvar}
Fix $a$ or $a,b$ and let $G$ be as in the previous section. For each $0\leq s \leq a$ we shall write $G_s$ for the corresponding group associated with $\GSp_{2a-2s}$ or $\GU({b-s,a-s})$; according if $G$ is symplectic or unitary. Let $\mc H$ be a compact open subgroup of $G(\mathbb{A}_{\Q,f})$. 
\begin{ass}
We shall assume that $\mc H$ is neat, in the terminology of \cite[1.4.1.8]{lan}.
\end{ass}
Associated with $G$ and $\mc H$ comes a moduli problem for abelian scheme which, under the assumption of neatness, is representable by a quasi projective scheme $S_G(\mc H)$ defined over a number field $K$. We have a minimal (or Baily--Borel) compactification  $S^\ast_G(\mc H)$ and we choose one and for all a smooth toroidal compactification $S^{\tor}_G(\mc H)$ \cite{lan}. We shall denote by $\pi$ the morphism from $S^{\tor}_G(\mc H)$ to $S^\ast_G(\mc H)$.
For $0\leq s \leq a$  we define the set of cusp label of genus $s$ 
\begin{align*}
C_s(\mc H)\colonequals  & (G_{s}(\mathbb{A}_{\Q,f})\times  \GL(\Lambda_{b,a,s}))N_{a,s}(\mathbb{A}_{\Q,f}) \setminus G(\mathbb{A}_{\Q,f}) / \mc H \\
(\mr{resp. }\; C_s(\mc H)\colonequals  & (G_s(\mathbb{A}_{\Q,f}) \times \GL(\Lambda_{a,s}))N_{a,s}(\mathbb{A}_{\Q,f}) \setminus G(\mathbb{A}_{\Q,f}) / \mc H).
\end{align*}
This is a finite set and we shall denote by $[\gamma]$ a generic element of this double quotient. 
We shall write \begin{align*}
\mc H_{[\gamma]} \colonequals  \gamma\mc H \gamma^{-1} \cap G_s(\mathbb{A}_{\Q,f})
\end{align*}
where we see $G_s$ as a component of the Levi of the parabolic $P_{b,a,s}$ (resp. $P_{a,s}$) of $G$. We can then define a stratification of $S^\ast_G(\mc H)$ as follows 
\begin{align*}
S^\ast_G(\mc H)= \bigsqcup_{s=0}^a \bigsqcup_{[\gamma] \in C_s(\mc H)} S_{G_s}(\mc H_{[\gamma]}). 
\end{align*}
If $s=0$, $C_0$ consists of $S_G(\mc H)$. For $s=a$ we obtain compact Shimura varieties.

We want to explicitly compute the stalks of the structural sheaf of $S^\ast_G(\mc H)$. This will be useful to define the Fourier--Jacobi expansion of automorphic forms.

We begin working with the unitary case. We consider the abelian scheme $\mc Z_{[\gamma]}\rightarrow  S_{G_s}(\mc H_{[\gamma]})$ defined in \cite[\S 2.6]{WanLan}. It is isogenous to a certain power of a universal abelian variety for $G_s$.  Let $N_{[\gamma]}\colonequals  \mc H_{[\gamma]} \cap Z(N_{a,s})(\Q)$; this group can be identified with a lattice in the group of $s \times s$ Hermitian matrices with $F$ coefficients. Indeed, for each $n$ we can define a unique Hermitian paring $(y,y')\mapsto b_n(y,y')$ so that $\mr{Tr}_{F/F_0}(b_n(y,y'))= \langle y(n-1),y'\rangle_s$ (where $\langle\phantom{a},\phantom{b}\rangle_s$ is the Hermitian form on $Y_s \times Y_s$ defined before) and consequently a unique $s \times s$ Hermitian matrix. Similarly in the symplectic case.\\
We define $S_{[\gamma]}\colonequals  \Hom_{\Z}(N_{[\gamma]},\Z)$ which, under the above identification, is a lattice in the set of symmetric matrices.  To each element $h \in S_{[\gamma]}$ we can associate $\mc L(h)$, a $\mathbb{G}_m$-torsor on $\mc Z_{[\gamma]}$. We shall denote by $S^+_{[\gamma]}$ the subset of totally non-negative (with respect to the embeddings of $F_0$ in $\mathbb{R}$) elements.\\
Let us denote by $\G_{[\gamma]}\colonequals \GL(Y_s) \cap \gamma\mc H \gamma^{-1}$. This group acts on $H^+_{[\gamma]}$ via the following formula 
\begin{align*}
g.h &=g^{-1}  h w_s(g^{-1})^\ast w_s.
\end{align*}

We now deal with  the symplectic case. We have the abelian scheme $\mc Z_{[\gamma]}$. Let $N_{[\gamma]}\colonequals  \mc H_{[\gamma]} \cap Z(N_{a,s})(\Q)$; this group can be identified with a lattice in the group of symmetric $s \times s$ matrices with $F_0$ coefficients. \\
We define $S_{[\gamma]}\colonequals  \Hom_{\Z}(N_{[\gamma]},\Z)$ which, under the above identification, is a lattice in the set of symmetric matrices.  To each element $h \in S_{[\gamma]}$ we can associate $\mc L(h)$, a $\mathbb{G}_m$-torsor on $\mc Z_{[\gamma]}$. We shall denote by $S^+_{[\gamma]}$ the subset of totally non-negative elements.\\
Let us denote by $\G_{[\gamma]}\colonequals \GL(Y_s) \cap \gamma\mc H \gamma^{-1}$. This group acts on $H^+_{[\gamma]}$ via the following formula 
\begin{align*}
g.h &=w_s^tgw_sh g.
\end{align*}
\begin{prop}\label{stalkmin}
Let $x$ be a closed point of $S_{G_s}(\mc H_{[\gamma]})$. 
 The completion of the strict henselisation  of the stalk is of $\mc O_{S^\ast_{G}(\mc H)}$ at $x$ is canonically isomorphic to 
\begin{align*}
\set{\sum_{h \in S^+_{[\gamma]}} a(h)q^h \:\;\vert \:\; a(h) \in \Homol^0(\hat{\mc Z}_{[\gamma],x},\mc L(h))}^{\G_{[\gamma]}}.
\end{align*}
\end{prop}
\begin{proof}
We shall sketch a proof of this well known fact in the symplectic case, following \cite[Chapter IV]{FaltingsChai}, as this will be useful in the following. 

Fix a cusp label $[\gamma]$. Over $S_{G_s}(\mc H_{[\gamma]})$ we have the universal abelian scheme $\mc A_{[\gamma]}$ and over it the abelian scheme $\mc Z_{[\gamma]}$ which is isogenous to $\mc A_{[\gamma]}^{s}$. We can define a $\mr{Hom}(S_{[\gamma]},\m G_m)$-torsor $M_{[\gamma]}\rightarrow \mc Z_{[\gamma]}$. For every cone $\sigma$ in  $\mc C(\Lambda_a/{(\gamma \Lambda_{a,s})}^{\perp})$ (the cone of positive semi-definite forms on $\left(\Lambda_a/\gamma \Lambda_{a,s}^{\perp}\right) \otimes \mathbb{R}$), we have a toroidal embedding $M_{[\gamma]} \hookrightarrow M_{\sigma}$ (\cite[Definition 6.1.2.3]{lan}). 
Summing up:
$$
    \xymatrix{ M_{[\gamma]} \ar[d] \ar@{^{(}->}[r] \ar@/_2pc/[ddd]_-{g} & M_{\sigma}\ar@/^/[dddl]^-{g'} \\
               \mc Z_{[\gamma]} \ar[d]& \\
               \mc A_{[\gamma]}\ar[d] &\\
               S_{G_s}(\mc H_{[\gamma]}) &}
$$
and hence
\begin{align*}
g_* \mc O_{M_{[\gamma]}} \cong &\oplus_{h \in S_{[\gamma]}} \mr H^0(Z_{[\gamma]}, \mc L(h)),\\
g'_* \mc O_{M_{\sigma}} \cong & \oplus_{h \in S_{[\gamma]} \cap \sigma^{\wedge}} \mr H^0(Z_{[\gamma]}, \mc L(h)),
\end{align*}
for $\sigma^{\wedge}$ the dual cone of $\sigma$. 
One can take the direct limit of all the $M_{\sigma}$, $\sigma \in \mc C_{[\gamma]}$, a smooth and projective admissible polyhedral decomposition of $\mc C(\Lambda_a/{(\gamma \Lambda_{a,s})}^{\perp})$, and obtain a scheme $\overline{M}_{[\gamma],\mc C_{[\gamma]}}$ which contains $M_{[\gamma]}$ as an open dense sub-scheme. 
Moreover, we now have an action of $\Gamma_{[\gamma]}$ which permutes the $h$ in $S_{[\gamma]}$.
We can stratify $\overline{M}_{[\gamma],\mc C_{[\gamma]}}$ by locally closed sub-schemes $Z_{\sigma}$, where each $Z_{\sigma}$ is given by the vanishing locus of the $h$-components, for $h$ positive definite on $\sigma$. So far, everything is algebraic.

We can complete $\overline{M}_{[\gamma],\mc C_{[\gamma]}}$ along the closed stratum $Z_{\sigma}$ (which corresponds to $\gamma \Lambda_{a,s}$) and obtain $\widehat{\overline{M}}_{[\gamma],\mc C_{[\gamma]}}$. We have 
\begin{align}\label{eq:pushfromtor}
(\widehat{\overline{M}}_{[\gamma],\mc C_{[\gamma]}} \rightarrow S_{G_s}(\mc H_{[\gamma]}))_* \mc O_{\widehat{\overline{M}}_{[\gamma],\mc C_{[\gamma]}}} \cong &\prod_{h \in S^+_{[\gamma]}} \mr H^0(Z_{[\gamma]}, \mc L(h)).
\end{align}

We denote by $\hat{S}^{\mr{tor}}_{G_s}(\mc H_{[\gamma]})_{\sigma}$ the quotient of $\widehat{\overline{M}}_{[\gamma],\mc C_{[\gamma]}}$ by $\Gamma_{[\gamma]}$. 
One can find a finite number of geometric points $\overline{x}$ of $\hat{S}^{\mr{tor}}_{G_s}(\mc H_{[\gamma]})_{\sigma}$ such that $\hat{S}^{\mr{tor}}_{G}(\mc H_{[\gamma]})_{\sigma}$ is covered by $\mr{Spec}(R_{\overline{x}})$, for $R_{\overline{x}}$ a finite \'etale extension of the strict local ring of $\overline{x}$.

 The toroidal compactification, associated with our choices of smooth and projective admissible polyhedral decomposition, is constructed gluing the different $\mr{Spec}(R_{\overline{x}})$ (along the faces of the corresponding cones $\sigma$'s). We denote by $S^{\mr{tor}}_{G}(\mc H_{[\gamma]})_{\sigma}$ the stratum in $S^{\mr{tor}}_{G}(\mc H)$ associated with $\sigma$. We have that the completion of $S^{\tor}_G(\mc H)$ along $S^{\mr{tor}}_{G}(\mc H_{[\gamma]})_{\sigma}$ is isomorphic to $\hat{S}^{\mr{tor}}_{G}(\mc H_{[\gamma]})_{\sigma}$. 
We also have that the  
$$ \pi: S^{\mr{tor}}_{G}(\mc H_{[\gamma]})_{\sigma} \rightarrow S_{G_s}(\mc H_{[\gamma]})$$
 is of the form \cite[Theorem 7.2.4.1 (5)]{lan}
\begin{align}\label{eq:tor_su_min_strato}
\overline{M}_{\sigma}/ \Gamma_{[\gamma]}\rightarrow \mc Z_{[\gamma]} \rightarrow S_{G_s}(\mc H_{[\gamma]}).
\end{align}

This and \eqref{eq:pushfromtor} imply the description of the completion  of the strict henselisation of the structure sheaf of the minimal compactification at $x$.

\end{proof}
\begin{remark}
Note that $g \in \G_{[\gamma]}$ sends $\Homol^0(\mc Z_{[\gamma]},\mc L(h))$ into $\Homol^0(\mc Z_{[\gamma]},\mc L(g.h))$. If $\G_{[\gamma]}(h)$ denotes the stabiliser of $h$ in $\G_{[\gamma]}$, the action of $\G_{[\gamma]}(h)$ on $\Homol^0(\mc Z_{[\gamma]},\mc L(h))$ is trivial (see again \cite[Lemma 5.1]{skinn_urb}).
\end{remark}
\begin{remark}
Because of the way the toroidal compactification $S^{\tor}_G(\mc H)$ is constructed, gluing the formal models $\hat{M}_{\sigma}$ and then using Artin's algebrisation theorem, there is no canonical description of the stalks, but only of their henselisation and completion.
\end{remark}

\subsection{\texorpdfstring{Algebraic representations of $\GL_{a+b}$}{Algebraic representations of GL{a+b}}} \label{subsec: alg rep GL}
A the beginning of this subsection we assume for simplicity that $F_0 = \Q$. Consider $\GL_{b} \times \GL_{a}/\Z$ with $b\geq a$; let $B_b$ be the Borel subgroup of upper triangular matrices, $T_b$ the split torus and $N_b$ the unipotent part. Let $B_a^o$ be the Borel subgroup of lower triangular matrices, $T_a$ the split torus and $N_a^o$ the unipotent part. We shall denote by $w_{b,a}$ the matrix $\left(\begin{array}{cc}
0 & w_b \\
\mr{Id}_a & 0
\end{array}\right)$ which represents the permutation 
$$ \left( \begin{array}{ccccccc}
 1 & 2 & \ldots & b &  b+1  & \ldots & b+a \\
 b+a & b+a-1 & \ldots & a+1 & 1  & \ldots &  a 
\end{array} \right).$$ 
 Let $k=(k_1,\ldots,k_b, k_{b+1},\ldots, k_{a+b})$ be a weight of $T_b \times T_a$ and consider the algebraic induction $L_k$ which, for each $\Z$-algebra $R$, is
\begin{align*}
L_k(R)\colonequals \set{f:R[\GL_{b} \times \GL_{a}]\rightarrow R | f(gnt)=k(t)f(g) \forall t \in T_b \times T_a, n \in N_b \times N^o_a},
\end{align*}
where $k(t_1,\ldots,t_b, t_{b+1},\ldots, t_{a+b})=t_{1}^{k_{1}} \cdots t_{b}^{k_{b}} t_{b+1}^{k_{b+1}} \cdots t_{a+b}^{k_{a+b}}$. It is a representation of $\GL_{b} \times \GL_{a}$ via $g.f(g')=f(g^{-1}g')$. We shall sometimes write $\rho_k$ to denote this representation.\\
We say that the weight is dominant (w.r.t. $N_b \times N^o_a$ inside the unitary group) if $k_1 \geq \ldots \geq k_b \geq  k_{b+a}\geq \ldots  \geq k_{b+1}$.

In the symplectic case, we consider $\GL_a$ with the Borel $B_a$ subgroup of upper triangular matrices with its split torus $T_a$ and unipotent radical $N_a$. For any weight $k$ we define the space $$L_k(R)\colonequals \set{f:R[\GL_{a}]\rightarrow R | f(gnt)=k(t)f(g) \forall t \in T_a, N_a}.$$ A weight is said to be dominant (w.r.t. $N_a$) if $k_1 \geq \ldots \geq k_a$.

One can extend all these definition to the case $F_0 \neq \Q$ just considering the Weil restriction functor. In particular we have the representation $L_k$ in general. It is an algebraic representations of $\mr{Res}_{\mc O_{F_0} / \Z}\GL_{b} \times \GL_{a}$ that is dominant w.r.t.  $\mr{Res}_{\mc O_{F_0} / \Z}N_b \times N^o_a$. In this context a weight will be an element of ${\Z[\Sigma]}^{b+a}$, being $\Sigma=\mr{Hom}(F_0,\C)$.  
\subsection{Algebraic automorphic forms}\label{AAF}
Consider $G$ and $\mc H$ as before. We have a universal abelian variety  $\mc A= (\mc A, \lambda, \iota, \eta)$  with PEL structure and a morphism
\begin{align*}\xi: \mc A \rightarrow S_G(\mc H).
\end{align*}  We can extend $\mc A$ to a semi-abelian variety $\mc G$ such that $\xi$ too extends to \begin{align*}\xi: \mc G \rightarrow S^{\tor}_G(\mc H).
\end{align*}
Let us denote by $e$ the unit section of $\xi$ and by $\omega$ the sheaf $e^\ast \Omega_{\mc G/S^{\tor}_G(\mc H)}$. 

In the symplectic case, we have a decomposition 
\begin{align*}
\omega \cong \bigoplus_{\sigma \in \Sigma} \omega_{\sigma} 
\end{align*}
and we define $\mc E $ to be 
\begin{align*}
 \mc E \cong & \bigoplus_{\Sigma} \isom(\mc O^a_{S^{\tor}_G(\mc H)},\omega_{\sigma}).
\end{align*}
This defines an (algebraic) left ${\GL_{a}}$-torsor, where ${\GL_{a}}$ acts on $\mc O^a_{S^{\tor}_G(\mc H)}$ on the right.
The sheaf of weight $k$ automorphic forms is then 
\begin{align*}
\omega^k= {\mc E}^{\mr{Res}_{\mc O_{F_0} / \Z}N_a}[-w_{a}k].
\end{align*}
Locally for the Zariski topology, this sheaf is isomorphic to $L_{-w_{a}k}$; indeed we have also 
\begin{align*}
\omega^k= {\mc E} \times^{\mr{Res}_{\mc O_{F_0} \GL_a}} L_{-w_{a}k}.
\end{align*}
For the unitary case, fix a CM type $(\Sigma,\Sigma^c)$ for $(F_0,F)$; over $F$ we have a decomposition 
\begin{align*}
\omega \cong \bigoplus_{\sigma \in \Sigma} \omega_{\sigma} \oplus \omega_{c\sigma} 
\end{align*}
and we define $\mc E = \mc E^+ \oplus \mc E^-$, where
\begin{align*}
 \mc E^+ \cong & \bigoplus_{\Sigma} \isom(\mc O^a_{S^{\tor}_G(\mc H)},\omega_{\sigma}),\\ 
 \mc E^- \cong & \bigoplus_{\Sigma^c} \isom(\mc O^b_{S^{\tor}_G(\mc H)},\omega_{\sigma}).
\end{align*}
This defines an (algebraic) ${\GL_{b} \times \GL_a}$-torsor.
The sheaf of weight $k$ automorphic forms is then 
\begin{align*}
\omega^k= {\mc E}^{\mr{Res}_{\mc O_{F_0} / \Z}N_b \times N^o_a}[-w_{b,a}k].
\end{align*}
Locally for the Zariski topology, this sheaf is isomorphic to $L_k$ and  as before we have
\begin{align*}
\omega^k= {\mc E} \times^{ \mr{Res}_{\mc O_{F_0} \GL_b \times \GL_a}} L_{-w_{b,a}k}.
\end{align*}
\begin{defin}
For any $\Z$-algebra $R$ we define the space of weight $k$ modular forms as
\begin{align*}
\M_k(\mc H,R) \colonequals \Homol^0({S^\ast_G(\mc H)}_{/R},\pi_\ast\omega^k)=\Homol^0({S^{\tor}_G(\mc H)}_{/R},\omega^k).
\end{align*}
\end{defin}
\begin{remark}
If the boundary of $S^\ast_G(\mc H)$ is of codimension strictly bigger than one, than $\M_k(\mc H,R)=\Homol^0({S_G(\mc H)}_{/R},\omega^k)$.
\end{remark}
We have the following theorem about Fourier--Jacobi expansion \cite[Section 5.3]{LanFJ};
\begin{teo}\label{stalkform}
Let $x$ be a geometric point of $S_{G_s}(\mc H_{[\gamma]})$. The completion of the stalk of $\pi_\ast\omega^k$ is isomorphic to 
\begin{align*}
\set{\sum_{h \in H^+_{[\gamma]}} a(h)q^h \:\;\vert \:\; a(h) \in \Homol^0(\hat{\mc Z}_{[\gamma],x},\mc L(h)\otimes \omega^k )}^{\G_{[\gamma]}}
\end{align*}
where invariance by $\G_{[\gamma]}$ means $a(h)=\rho_k(\gamma^{-1}g\gamma)a(g.h)$,  the action $g.h$ is the one defined before Proposition \ref{stalkmin}. 

If $f$ is an element of the stalk, then the elements $a(h)$ can be take in 
$ \mr H^0(\mc Z_{[\gamma]} \times U,\mc L(h) \omega^k)$, for $U$ containing $x$. 
\end{teo}
\begin{proof}
The proof goes exactly as in the proof of Proposition \ref{stalkmin}, noticing that the have to take the invariant by $\Gamma_{[\gamma]}$ of  
\begin{align}\label{eq:omegasustrato}
\omega^k \rightarrow \overline{M}_{\sigma} \rightarrow \mc Z_{[\gamma]} S_{G_s}(\mc H_{[\gamma]}).
\end{align}

For the second statement, we note that, exactly as in the  section on Fourier--Jacobi expansion in \cite[V.1]{FaltingsChai}, we just need to base change \eqref{eq:omegasustrato} to $U$.
\end{proof}

\subsection{The Siegel morphism} \label{subsec: sieg morph}
In this section we shall study under which condition on the weight $k$ we can have a non-cuspidal form $f$ of that weight. This section is highly influenced by the work \cite{WeiVek}.

Let $\iota_{[\gamma]} : S_{G_s}^\ast(\mc H_{[\gamma]}) \rightarrow S^\ast_G(\mc H)$ be the component of the boundary of the minimal compactification associated with the cusp label $[\gamma]$.
We define the Siegel operator 
\begin{align*}
\Phi_{[\gamma]} (f)= f_{\vert_{S_{G_s}(\mc H_{[\gamma]})} } \in \Homol^0(S^\ast_{G_s}(\mc H_{[\gamma]}), \iota_{[\gamma]}^*\pi_* \omega^k).
\end{align*}
In the symplectic case, for a weight $k= (k_1, \ldots, k_a)$ we define  $k'=(k_1, \ldots, k_{a-s})$, while in the unitary case, for a weight $k= (k_1, \ldots, k_b,k_{b+1},\ldots, k_{a+b})$ we define  $k'=(k_1, \ldots, k_{b-s},k_{b+1},\ldots, k_{a+b-s})$. The following lemma is very important.
\begin{lemma}
Let $R$ be a subfield of $\C$. If $f \in \M_k(\mc H,R)$ then
\[
\Phi_{[\gamma]} (f) \in \M_{k'}(\mc H_{[\gamma]},R).
\]
\end{lemma}
\begin{proof}
Using the comparison of algebraic and analytic Fourier--Jacobi expansion \cite{LanFJ}, we can rephrase the above proof in analytic terms. This has been done, for example, for Siegel forms in \cite[\S 5]{VanderGeer} and for unitary forms in \cite[\S 3.6]{Hsieh}.
\end{proof}
\begin{rmk}\label{rem:FaltingSiegel}
There is an alternative definition of the Siegel morphism $\Phi_{[\gamma]}$, as in \cite[V.1]{FaltingsChai}. Let $x$ be a point of $S_{G_s}(\mc H_{[\gamma]})$, and consider the Fourier--Jacobi expansion at this point. 
Let $f=\sum_h a(h)q^h$ be the image of $f \in \M_k(\mc H,R)$. When restricting to $S_{G_s}^\ast(\mc H_{[\gamma]})$ all the $a(h)$ are sent to $0$ except $a(0)$. But $$a(0) \in \Homol^0({\mc Z}_{[\gamma]},\mc L(0)\otimes \omega^k ) = \Homol^0(S_{G_s}(\mc H_{[\gamma]}), \omega^k ),$$
where we used that $\mc L(0)\otimes \omega^k =\mc O_{{\mc Z}_{[\gamma]}}\otimes \omega^k$ comes via pullback from the open Shimura variety. So $\Phi_{[\gamma]}(f)=a(0)$. The fact that $\Phi_{[\gamma]}(f)$ extends to $S_{G_s}^\ast(\mc H_{[\gamma]})$ is simply because $f$ is already defined on the boundary.
\end{rmk}
We give two key definitions:
\begin{defin}
In the symplectic case, we say that a weight has corank $q$ if
\begin{align*}
q = \vert\set{1\leq i \leq a \vert k_i=k_a}  \vert
\end{align*}
and $k_a$ is parallel.

In the unitary case, we say that a weight $k$ has corank $1 \leq q \leq a$ if $k_b -k_{b+1}$ is parallel and 
\begin{align*}
q = \vert\set{1\leq i \leq b \vert k_i=k_b}  \vert=\vert\set{b+1\leq i \leq a+b \vert k_i=k_{b+1}}  \vert.
\end{align*}
If there is no $q$ for which $k$ satisfies the above conditions, we say that $k$ has corank $0$.
\end{defin}
\begin{defin}
We say  that $0 \neq f \in \M_k(\mc H,R)$ has corank $q$ if $q$ is the minimal integer such that $\bigoplus_{[\gamma] \in C_{q+1}(\mc H)} \Phi_{[\gamma]}(f)=0$. 
(We assume $C_{a+1}$ to be empty.)
\end{defin}
We shall write $\M^q_k(\mc H,R)$ for the subspace of $\M_k(\mc H,R)$ of forms of corank at most $q$. 

We define $\mc J^q$ to be the sheaf of ideals associated with
\[
\bigsqcup_{s=q+1}^a \bigsqcup_{[\gamma] \in C_s(\mc H), \sigma \in \mc C_{[\gamma]}} S^{\mr{tor}}_{G_s}(\mc H_{[\gamma]})_{\sigma}  \hookrightarrow S^{\mr{tor}}_G(\mc H).
\]
We have the following proposition.
\begin{prop}\label{stalkJs}
Let $x$ be a closed $\overline{\Q}$-point of $S_{G_s}(\mc H_{[\gamma]})$. The completion of the stalk of $\pi_\ast(\omega^k \otimes \mc J^q)$  is isomorphic to 
\begin{align*}
 \prod_{[h] \in H^+_{[\gamma]}/\G_{[\gamma]}, \mr{rk}(h)\geq s- q} {\Homol^0(\hat{\mc Z}_{[\gamma],x},\mc L(h)\otimes \omega^k)}^{\G_{[\gamma]}(h)},
\end{align*}
where $\G_{[\gamma]}(h)$ is the subgroup of $\G_{[\gamma]}$ which stabilises $h$.
\end{prop}
\begin{proof}
Indeed, by the construction of the toroidal compactification and its stratification, the maximal ideal of $\mc O_{S^\ast_{G}(\mc H),x}$ is generated by the $q^h$'s and the elements which generates the ideal $\pi_*\mc J_x^q\mc O_{S^\ast_{G}(\mc H),x} $ are exactly the $q$ in the sum above. Hence
\begin{align*}
{\pi_\ast(\omega^k \otimes \mc J^q)}^{\wedge}_x=\set{\sum_{h \in H^+_{[\gamma]}, \mr{rk}(h)\geq s- q} a(h)q^h \:\;\vert \:\; a(h) \in \Homol^0(\hat{\mc Z}_{[\gamma],x},\mc L(h)\otimes \omega^k) }^{\G_{[\gamma]}}.
\end{align*}
\end{proof}
We can now give the main theorem of this section, which is a generalisation of \cite[Satz 2]{WeiVek}:
\begin{teo}\label{Weiss2}
Let $R$ be a subfield of $\C$. If $0 \neq f \in \M_k(\mc H,R)$  then $\mr{cork}(k) \geq \mr{cork}(f)$.
\end{teo}
\begin{proof}
If a form has at least corank $q$ than there exists at at least a cusp label $[\gamma]$ in $C_a(\mc H)$ (so of minimal genus)  such that the Fourier--Jacobi expansion at that cusp has at least a non zero coefficient $a(h)$, for $h$ a matrix of rank $a-q$. In particular, this means that the space of invariants ${L_k(\overline{\Q}) }^{\G_{[\gamma]}(h)}$ is not zero. Let us calculate this space.

We know that we can write, in a suitable basis, $h = \left(\begin{array}{cc}
h' & 0 \\
0 & 0
\end{array}\right) $, where $h'$ is a matrix of size $a-q \times a-q$ and maximal rank. We consider the unitary case now, the symplectic case being similar and easier. It is immediate to see that all matrices in $\GL_a(F)$ of the form $\left( \begin{array}{cc}
\mr{Id}_{a-q} & m \\
0 & g'
\end{array} \right)$, with $g'$ in $\GL_q(F)$ and $m \in M_{a-q,q}(F)$, stabilises $h$.   Let $N_{a,q}$ be the unipotent part of parabolic subgroup of $P_{b,a,q} $;  the $\Q$-points of the Levi of $P_{b,a,q}$ are $(\GL_{q}\times\GL_{b-q} \times\GL_{a-q} \times \GL_{q})(F)$.\\
 Using the theory of higher weights (we are over a characteristic zero field), we have that $L_{-w_0k}^{N_{a,q}}$ decomposes, as $(\GL_{q}\times(\GL_{b-q} \times\GL_{a-q}) \times \GL_{q})(F)$-module, as the irreducible representation
\begin{align*}
L_{(-k_{b},\ldots,-k_{b-q})}\otimes L_{(k_1, \ldots, k_{b-q} , k_{b+q+1},\ldots, k_{a+b})}\otimes L_{(-k_{b+1}, \ldots, -k_{b+q}) }.
\end{align*}
 If we intersect $\GL_{q}(F)\times \mr{Id}_{a+b-2q} \times \GL_{q}(F) $ with $\G_{[\gamma]}$ and we obtain a subgroup of finite order in $ \GL_q(\mc O_{F})$ 
(which, we recall, is embedded as the matrices $$\left( 
\begin{array}{ccc}
w_0 (g^{-1})^\ast w_0  & 0 & 0 \\
0 & 1 & 0 \\
0 & 0 & g 
\end{array}
\right)$$ inside $G$).

This group is not Zariski dense in $ \GL_q$, due to the fact that it contains only matrices whose determinant is a unit in $\mc O_F$, but it is not to far from being it. Indeed, let $(-k_{b},\ldots,-k_{b-q})\times (-k_{b+1}, \ldots, -k_{b+q})$  an algebraic characters whose kernel contains $\GL_q \times \GL_q(\mc O_{F_0})$ (so that the space of invariants is not zero). Firstly, we want the representation to factor through the determinant, hence  $k_b=\ldots=k_{b-q+1}$ and $k_{b+1}=\ldots =k_{b+q}$. Then we are left with the representation of $\GL_1$, explicitly $L_{k_b}\otimes L_{-k_{b+1}}$, hence $k_b-k_{b+1}$ must be a parallel weight in $\Z[\Sigma]$.

These are exactly the condition given by the theorem.

Note that, if not zero, then ${L_{-w_0k}(\overline{\Q}) }^{\G_{[\gamma]}} $ is isomorphic to the representation $L_{(k_1, \ldots, k_{b-q} , k_{b+q+1},\ldots, k_{a+b})}$. 
\end{proof}
\begin{rmk}
Note that over a general basis (for example in characteristic $p$) the theorem is not true (for example, non parallel weights which are parallel modulo $p$ can admit non-cuspidal forms). This was already known to Hida (see \cite[Remark 4.8]{HidaPAFS}).
\end{rmk}
We conclude with the following proposition that gives an algebraic description of $\Phi_{[\gamma]}$. Its proof is clearly inspired by \cite[Proposition 5.7]{skinn_urb} and its version in families will be a key ingredient in the construction of non-cuspidal families.
\begin{prop}\label{prop: SiegelSheaves}
Let $R$ be a subfield of $\C$. For a weight $k$ we let  $k'$ be as in the beginning of the section. Suppose $q=\mr{cork}(k)$, we have the following exact sequence of sheaves on ${S^\ast_G(\mc H)}_{/R}$:
\begin{align*}
0 \rightarrow \pi_\ast(\omega^{k}\otimes \mc J^0) \rightarrow  \pi_\ast(\omega^{k}\otimes \mc J^q) \rightarrow \bigoplus_{C_{1}}\iota_{[\gamma],\ast} \pi_{[\gamma],\ast}(\omega^{k'}\otimes \mc J_{[\gamma]}^{q-1})\rightarrow 0, 
\end{align*}
where $\iota_{[\gamma]}$ is the closed inclusion of $S^\ast_{G'}(\mc H_{[\gamma]})$ into ${S^\ast_G(\mc H)}$ and $\pi_{[\gamma]}$ (resp. $\mc J_{[\gamma]}^{s-1}$) is defined as in \ref{Shimvar} (resp. before \ref{stalkJs}) for $S_{G'}(\mc H_{[\gamma]})$.
\end{prop}
\begin{proof}
By {\it fpqc}-descent, we shall check that the sequence is exact on the completion of the stalks using Proposition~\ref{stalkJs}, see for example \cite[\S 5.3]{LanFJ}. Fix one cusp label $\gamma \in C_1$ and let Im be the image of the restriction to the boundary. Suppose that we know the isomorphism:
\begin{align*}
\iota_{[\gamma]}^\ast\im \cong  \pi_{[\gamma],\ast} \omega^{k'} \otimes_{\mc{O}_{S^\ast_{G_s}(\mc H_{[\gamma]})}} \mc J_{[\gamma]}^{q-1},
\end{align*}
then it is immediate to see that $\iota_{[\gamma],\ast} \iota_{[\gamma]}^\ast \im \cong \im $ as $\mathrm{Supp}(\im) \subset \sqcup_{_{C_{1}}} S^\ast_{G_1}(\mc H_{[\gamma]})$.
Let $x$ be a point in a cusp label $[\gamma_1] \in C_{s}(\mc H)$, and let $[\gamma_2] \in C_{s-1}(\mc H_{[\gamma]})$ be the only genus $s-1$ cusp label to which $x$ belongs. 
We have
\begin{align*}
\widehat{\pi_\ast (\omega_k \otimes \mc{J}^0)}_x = & \prod_{[h] \in H^+_{[\gamma_1]}/\G_{[\gamma_1]}, \mr{rk}(h)\geq s} {\Homol^0(\hat{\mc Z}_{[\gamma_1],x},\mc L(h)\otimes \omega^k)}^{\G_{[\gamma_1]}(h)}, \\
\widehat{\pi_\ast (\omega_k \otimes \mc{J}^q)}_x = &  \prod_{[h] \in H^+_{[\gamma_1]}/\G_{[\gamma_1]}, \mr{rk}(h)\geq s-q} {\Homol^0(\hat{\mc Z}_{[\gamma_1],x},\mc L(h)\otimes \omega^k)}^{\G_{[\gamma_1]}(h)}.
\end{align*}

The image is hence 
\begin{align*}
\prod_{[h] \in H^+_{[\gamma_1]}/\G_{[\gamma_1]}, s-q \leq r(h)<s} {\Homol^0(\hat{\mc Z}_{[\gamma_1],x},\mc L(h)\otimes \omega^k)}^{\G_{[\gamma_1]}(h)}.
\end{align*}

By construction of the toroidal compactification, the injection $H^+_{[\gamma_2]} \hookrightarrow H^+_{[\gamma_1]}$ induces an equivalence between $H^+_{[\gamma_2]}/\G_{[\gamma_2]}$ and the matrices of rank smaller than $
s$ of $H^+_{[\gamma_1]}/\G_{[\gamma_1]}$ of the form  $h=\left(\begin{array}{cc}
h' & 0\\
0 & 0
\end{array}\right)$ (in a properly chosen basis, depending only on the cusp label $[\gamma]$).  The description of $\pi_{[\gamma],\ast} (\omega_{k'} \otimes_{\mc{O}_{S^\ast_{G_s}(\mc H_{[\gamma]})}} \mc J_{[\gamma]}^{q-1})$ given by Proposition~\ref{stalkJs} tell us that the completed stalk ${\pi_{[\gamma],\ast} (\omega_{k'} \otimes_{\mc{O}_{S^\ast_{G_s}(\mc H_{[\gamma]})}} \mc J_{[\gamma]}^{q-1})}_x$ is
\begin{align*}
 \prod_{[h'] \in H^+_{[\gamma_2]}/\G_{[\gamma_2]}, s-1 \geq r(h')\geq s-1-(q-1)} {\Homol^0(\hat{\mc Z}_{[\gamma_2],x},\mc L(h')\otimes \omega^{k'})}^{\G_{[\gamma_2]}(h')}.
\end{align*}

We deal only with symplectic case, being the unitary case similar. Unfolding the definitions (in particular, remember that we apply the longest element of the Weyl group to our representation) we see that the action of $\G_{[\gamma_1]}(h)$ factors through 
\begin{align*}
 \set{\left( 
\begin{array}{cccc}
g & 0 & 0 & 0 \\
0 & 1_{g-s} & 0 & 0 \\
0 & 0 & 1_{g-s} & 0 \\
0 & 0 & 0 & \nu(g')w_s^tg^{-1}w_s
\end{array}
\right) \vert g=\left(\begin{array}{cc}
g'' & 0\\
n'' & 1_{s-r(h)}
\end{array}\right) },
\end{align*} 
and similarly for $\G_{[\gamma_2]}(h') $.

We can identify $\mc Z_{[\gamma_1]}$ with $\mr{Hom}(\Lambda_{a,s},\mc A_{[\gamma_1]})$ for $ \mc A_{[\gamma_1]}$ the universal abelian variety over $S_{G_s}(\mc H_{[\gamma_1]})$. Similarly $\mc Z_{[\gamma_2]} \cong \mr{Hom}(\Lambda_{a-1,s-1},\mc A_{[\gamma_2]})$. 

Note that $ \mc A_{[\gamma_1]} \cong  \mc A_{[\gamma_2]}$ and let $\zeta: \mc Z_{[\gamma_1]} \rightarrow \mc Z_{[\gamma_2]}$ be the natural projection. Using the fact that $\omega_k$ is defined only in terms of $\mc A_{[\gamma_1]}$ and not of $\mc Z_{[\gamma_1]}$ we see, as in the proof of  \cite[Lemma 5.1]{skinn_urb}, that $\mc L(h)\otimes \omega^{k}$ comes via pull-back from a sheaf $ \tilde{ \mc L}(h)\otimes \omega^{k}$ on the quotient $\mc Z_{h}$ of $\mc Z_{[\gamma_1]}$ on which $\G_{[\gamma_1]}(h)$ does not act. Hence we have, using the projection formula,
\begin{align*}
{\Homol^0(\hat{\mc Z}_{[\gamma_1],x},\mc L(h)\otimes \omega^{k})}^{\G_{[\gamma_1]}(h)} = & {\Homol^0(\hat{\mc Z}_{h,x}, \tilde{ \mc L}(h)\otimes \omega^{k})}^{\G_{[\gamma_1]}(h)} \\ 
= & {\Homol^0\left(\hat{\mc Z}_{h,x}, {(\tilde{ \mc L}(h)\otimes \omega^{k})}^{\G_{[\gamma_1]}(h)}\right)}.
\end{align*}
We do the same with $\mc Z_{[\gamma_2]}$ and note that by construction $\mc Z_{h} =\mc Z_{h'}$. 
As $\tilde{ \mc L}(h)\otimes \omega^{k}$ is \'etale locally isomorphic to $L_{-w_0k}$, we can explicit ${(\tilde{ \mc L}(h)\otimes \omega^{k})}^{\G_{[\gamma_1]}}$ with the same calculation performed in the proof of Theorem~\ref{Weiss2}. The same calculation for ${(\tilde{ \mc L}(h')\otimes \omega^{k'})}^{\G_{[\gamma_2]}(h')}$ gives 
\begin{align*}
{(\tilde{ \mc L}(h)\otimes \omega^{k})}^{\G_{[\gamma_1]}(h)} \cong {(\tilde{ \mc L}(h')\otimes \omega^{k'})}^{\G_{[\gamma_2]}(h')}
\end{align*}
and this allows us to conclude.
\end{proof}
\begin{remark}
Note again that over a general basis the proposition is not necessarily true.
\end{remark}
In particular, note that we have
 \begin{align*}
\M^q_k(\mc H,R)=\Homol^0({S^{\mr{tor}}_G(\mc H)}_{/R},\omega^k\otimes \mc J^q).
\end{align*}

We conclude observing that all the results of the section can be generalised to arbitrary Shimura varieties of type $A$ or $C$, with the only inconvenience of a less explicit description of the conditions on the weights and even more cumbersome notation.

\section{\texorpdfstring{$p$-adic section}{p-adic Section}} \label{sec: p-adic Section}
Let $p > 2$ be a rational prime number, fixed from now on. We now move on to $p$-adic modular forms. Let $\overline \Q$ be the algebraic closure of $\Q$ in $\C$ and let $\overline \Q_p$ be an algebraic closure of $\Q_p$. We denote with $\C_p$ the completion of $\overline \Q_p$. We fix once and for all an embedding $\overline \Q \hookrightarrow \overline \Q_p$. We assume that $p$ is unramified in $F_0$ and that the ordinary locus of our Shimura variety it is not empty (it is then automatically dense and this is always true in the symplectic case). In the unitary case we moreover assume that each prime above $p$ in $\mc O_{F_0}$ splits completely in $\mc O_{F}$. We are hence in a situation considered in \cite{pel}.

We let $K$ denote a finite extension of $\Q_p$, that we assume to be `sufficiently big' (for example it must contain the image of all the embeddings $F \hookrightarrow \C_p$). This is in contrast with the previous notation for $K$ (it was a number field of definition for the Shimura varieties), but it should not cause any confusion. All our objects will be defined over $K$ or over $\mc O_K$, even if the notation does not suggest it. We assume that the compact open subgroup $\mc H \subset G(\mathbb A_{F_0,f})$ is of the form $\mc H = \mc H^p G(\Z_p)$, where $\mc H^p \subset G(\hat {\mathbb Z}^p)$ is a (sufficiently small) compact open subgroup. In this way $S_G(\mc H)$ and his compactifications have a natural model, denoted with the same symbol, over $\mc O_K$.

Let $p = \prod_{i=1}^k \varpi_i$ be the decomposition of $p$ in $\mc O_{F_0}$ and let $\mc O_i$ be the completion of $\mc O_{F_0}$ with respect to $(\varpi_i)$. (here $\varpi_i$ is a fixed uniformiser of $\mc O_i$.) We have $\mc O_{F_0,p} \cong \prod_{i=1}^k \mc O_i$. We set $d_i \colonequals [F_i:\Q_p]$, where $F_i\colonequals \Fr(\mc O_i)$. From now on, we assume that $K$ is big enough to contain the image of all embeddings $F \hookrightarrow \C_p$. In this section $A$ will be an abelian scheme given by the moduli problem associated to $Y$. We assume that $A$ is defined over a finite extension of $\mc O_K$, so it comes from a rigid point of $\mathfrak Y^{\rig}$.

Let $\mathfrak S^{\tor}_G(\mc H)$ be the formal completion of $S^{\tor}_G(\mc H)$ along its special fiber and let $\mathfrak S^{\tor,\rig}_G(\mc H)$ be the rigid fiber of $\mathfrak S^{\tor}_G(\mc H)$. As in \cite{pel} we have the Hodge height function
\begin{gather*}
\Hdg \colon \mathfrak S^{\tor,\rig}_G(\mc H) \to [0,1]^k \\
x \mapsto (\Hdg(x)_i)_i
\end{gather*}
If $\underline v = (v_i)_i \in [0,1]^k$ we set
\[
\mathfrak S^{\tor,\rig}_G(\mc H)(\underline v) \colonequals \set{x \in \mathfrak S^{\tor,\rig}_G(\mc H) \mbox{ such that } \Hdg(x)_i \leq v_i \mbox{ for all } i} .
\]
We assume that each $v_i$ is small enough in the sequel, as in \cite[Section~1]{pel}. In particular we have the tower of formal schemes
\[
\mathfrak S^{\tor}_G(\mc H p^n)(\underline v) \to \mathfrak S^{\tor}_G(\mc H p^n)_{\Iw}(\underline v) \to \mathfrak S^{\tor}_G(\mc H p)_{\Iw}(\underline v)  \to \mathfrak S^{\tor}_G(\mc H)(\underline v).
\]
We adapt all the notation of \cite{pel}, everything should be clear from the context. For example, we will work with the weight space $\mc W_a$ in the symplectic case and $\mc W_{b,a}$ in the unitary case. Recall that it is the rigid analytic space associated to the completed group algebra $\mc O_K \llbracket \T_a(\Z_p) \rrbracket$ or $\mc O_K \llbracket\T_b \times \T_a(\Z_p) \rrbracket$. As in \cite[Section~2]{pel}, we have, for any tuple of non-negative rational numbers $\underline w$, the affinoid subdomain $\mc W_a(\underline w) \subset \mc W_a$ (and $\mc W_{b,a}(\underline w) \subset \mc W_{b,a}$). Here $\underline w = (w^\pm)_{i=1}^k$ in the unitary case and $\underline w = (w)_{i=1}^k$ in the symplectic case. Let $\mc V \subset \mc W_a$ or $\mc V \subset \mc W_{b,a}$ be an affinoid and let $\chi^{\un}_{\mc V}$ the associated universal character. We will denote the usual involution on the weight space, defined using the longest element of the Weil group, by $\chi \mapsto -w_0 \chi$

Let $\underline w$ be a tuple of rational numbers such that $\chi^{\un}$ is $\underline w$-analytic, so $\mc V \subset \mc W_a(\underline w)$ or $\mc V \subset \mc W_{b,a}(\underline w)$. Let $\underline v$ be adapted to $\underline w$. One of the main construction of \cite{pel} is the sheaf $\underline \omega_{\underline v,\underline w}^{\dagger\chi_{\mc V}^{\un}}$ over $\mathfrak S^{\tor,\rig}_G(\mc H p^n)_{\Iw}(\underline v) \times \mc V$ whose global sections are by definition the families of ($\underline v$-overconvergent and $\underline w$-analytic) modular forms parametrised by $\mc V$, of Iwahoric level. We consider also the rigid spaces $\mc W_a(\underline w)^\circ$ and $\mc W_{b,a}(\underline w)^\circ$ and their formal models $\mathfrak W_a(\underline w)^\circ$ and $\mathfrak W_{b,a}(\underline w)^\circ$ introduced in \cite[Section~5.2]{pel}. It is the correct weight space to consider when working with modular forms of level $\mc H p^n$. If $\mathfrak U \subset \mathfrak W_a(\underline w)^\circ$ or $\mathfrak U \subset \mathfrak W_{b,a}(\underline w)^\circ$ is an affine, we have the sheaf $\underline {\mathfrak w}_{\underline v,\underline w}^{\dagger\chi_{\mathfrak U}^{\un}}$ over $\mathfrak S^{\tor}_G(\mc H p^n)(\underline v) \times \mathfrak U$, where $n$ depends on $\mathfrak U$, and also its rigid fiber $\underline \omega_{\underline v,\underline w}^{\dagger\chi_{\mc U}^{\un}}$. The global sections of $\underline \omega_{\underline v,\underline w}^{\dagger\chi_{\mc U}^{\un}}$ are by definition the families of ($\underline v$-overconvergent and $\underline w$-analytic) modular forms parametrised by $\mc U$, of level $\mc H p^n$. Let $\mc U$ be the rigid fiber of $\mathfrak U$. If $\mc U$ is the image of a given affinoid $\mc V \subset \mc W_a(w)$ under the natural morphism $\mc W_a(w) \to \mc W_a(w)^\circ$ (and similarly in the unitary case), we can recover the sheaf $\underline \omega_{\underline v,\underline w}^{\dagger\chi_{\mc V}^{\un}}$ from $\underline \omega_{\underline v,\underline w}^{\dagger\chi_{\mc U}^{\un}}$. For technical reasons, we will start working with $\underline {\mathfrak w}_{\underline v,\underline w}^{\dagger\chi_{\mathfrak U}^{\un}}$ and $\underline \omega_{\underline v,\underline w}^{\dagger\chi_{\mc U}^{\un}}$

We are going to describe the stalks of the projection to the minimal compactification of $\underline \omega_{\underline v,\underline w}^{\dagger\chi}$.
\subsection{Analytic induction and Fourier expansion} \label{subsec: an ind}
We first of all need to rewrite Subsection~\ref{subsec: alg rep GL} in the $p$-adic setting. Thanks to our assumption that $p$ is unramified in $\mc O_{F_0}$ we can be completely explicit.

We consider the algebraic group $\GL^{\mc O}$ over $\Z_p$ defined, in the unitary and symplectic case respectively, by
\[
\GL^{\mc O} \colonequals \prod_{i=1}^k \Res_{\mc O_i/\Z_p}(\GL_{b} \times \GL_{a}) \mbox{ and } \GL^{\mc O} \colonequals \prod_{i=1}^k \Res_{\mc O_i/\Z_p}\GL_{a}.
\]
We also have the subgroup $\T^{\mc O}$ defined by
\[
\T^{\mc O} \colonequals \prod_{i=1}^k \Res_{\mc O_i/\Z_p}(\m G^{b}_{\operatorname{m}} \times \m G^{a}_{\operatorname{m}}) \mbox{ and } \T^{\mc O} \colonequals \prod_{i=1}^k \Res_{\mc O_i/\Z_p}\m G^{a}_{\operatorname{m}}.
\]
Over $K$, we have that $\T^{\mc O}$ is a split maximal torus of $\GL^{\mc O}$. We consider the Borel subgroup $\B^{\mc O}$ given, in the unitary case, by couples of matrices whose first component is upper triangular and whose second component is lower triangular (in the symplectic case we consider upper triangular matrices). We will write $\U^{\mc O}$ for the unipotent radical of $\B^{\mc O}$. We write $\B^{\mc O, \op}$ and $\U^{\mc O, \op}$ for the opposite subgroups of $\B^{\mc O, \op}$ and $\U^{\mc O, \op}$. Let $\I^{\mc O}$ be the Iwahori subgroup of $\GL^{\mc O}(\Z_p)$ given, in the unitary case, by couples of matrices whose first component has upper triangular reduction and whose second component has lower triangular reduction (in the symplectic case we consider matrices with upper triangular reduction). Let $\N^{\mc O, \op}$ be the subgroup of $\U^{\mc O, \op}(\Z_p)$ given by those matrices that reduce to the identity modulo $p$. We have an isomorphism of groups
\[
\N^{\mc O, \op} \times \B^{\mc O}(\Z_p) \to \I^{\mc O}
\]
given by the Iwahori decomposition.

We use the following identification, in the unitary and symplectic case respectively.
\begin{gather*}
\N^{\mc O, \op} = \prod_{i=1}^k \left( p\mc O_i^{\frac{b(b-1)}{2}} \times p\mc O_i^{\frac{a(a-1)}{2}} \right) \subset \prod_{i=1}^k \left( \mathbb A^{\frac{b(b-1)}{2}, \rig} \times \mathbb A^{\frac{a(a-1)}{2}, \rig} \right),\\
\N^{\mc O, \op} = \prod_{i=1}^k p\mc O_i^{\frac{a(a-1)}{2}} \subset \prod_{i=1}^k \mathbb A^{\frac{a(a-1)}{2}, \rig}.
\end{gather*}
Given $\underline w$, a tuple of positive real numbers as in the definition of the weight spaces, we define in the unitary and symplectic case respectively
\begin{gather*}
\N^{\mc O, \op}_{\underline w} \colonequals \bigcup_{(x_i^\pm) \in \N^{\mc O, \op}} \prod_{i=1}^k \left( B(x_i^+, p^{-w_i^+}) \times B(x_i^-, p^{-w_i^-}) \right),\\
\N^{\mc O, \op}_{\underline w} \colonequals \bigcup_{(x_i) \in \N^{\mc O, \op}} \prod_{i=1}^k B(x_i, p^{-w_i}),
\end{gather*}
where $B(x,p^{-w})$ is the ball of center $x$ and radius $p^{-w}$ inside the relevant affine rigid space.

We say that a function $f \colon \N^{\mc O, \op} \to K$ is \emph{$\underline w$}-analytic if it is the restriction of an analytic function $f \colon \N^{\mc O, \op}_{\underline w} \to K$. We write $\mc F^{\underline w-\an}(\N^{\mc O, \op},K)$ for the set of $\underline w$-analytic functions. If $w_i^\pm = 1$ for all $i$ and $f$ is $\underline w$-analytic, we simply say that $f$ is analytic and we write $\mc F^{\an}(\N^{\mc O, \op},K)$ for the set of analytic functions. A function is \emph{locally analytic} if it is $\underline w$-analytic for some $\underline w$ and we write $\mc F^{\locan}(\N^{\mc O, \op},K)$ for the set of locally analytic functions.

Let now $\chi$ be a $\underline w$-analytic character in $\mc W_{b,a}(\underline w)^\circ(K)$ or $\mc W_a(\underline w)^\circ(K)$. We set
\begin{gather*}
L_\chi^{\underline w-\an, \circ} \colonequals \{f \colon \I^{\mc O} \to K \mbox{ such that } f(it)=\chi(t)f(i)\\
\mbox{ for all } (i,t) \in \I^{\mc O} \times \T_{\underline w}^{\mc O} \mbox{ and } f_{|\N^{\mc O, \op}_{\underline w}} \in \mc F^{\underline w-\an}(\N^{\mc O, \op},K)\},
\end{gather*}
where $\T_{\underline w}^{\mc O}$ is the torus given by (with the obvious meaning of $R/p^{\underline w}R$)
\[
\T_{\underline w}^{\mc O}(R) = \ker (\T^{\mc O}(R) \to \T^{\mc O}(R/p^{\underline w})).
\]
The definition of the spaces $L_\chi^{\an, \circ}$ and $L_\chi^{\locan, \circ}$ is similar. They all are representations of $\I^{\mc O}$ via $(i \star f)(x) = f(xi)$. If $\chi$ is a $\underline w$-analytic character in $\mc W_{b,a}(\underline w)(K)$ or $\mc W_a(\underline w)(K)$ we have the spaces $L_\chi^{\underline w-\an}$, $L_\chi^{\an}$, and $L_\chi^{\locan}$ defined using the action of the whole $\T^{\mc O}$ (or, that is the same, the action of $\B^{\mc O}$).

Let now $\mc U = \Spm(A)$ be an open affinoid in $\mc W_{b,a}(\underline w)^\circ$ or $\mc W_a(\underline w)^\circ$, with universal character $\chi_{\mc U}^{\un}$. We define
\begin{gather*}
L_{\chi_{\mc U}^{\un}}^{\underline w-\an, \circ} \colonequals \{f \colon \I^{\mc O} \to A \mbox{ such that } f(it)=\chi_{\mc U}^{\un}(t)f(i)\\
\mbox{ for all } (i,t) \in \I^{\mc O} \times \T_{\underline w}^{\mc O} \mbox{ and } f_{|\N^{\mc O, \op}_{\underline w}} \in \mc F^{\underline w-\an}(\N^{\mc O, \op},A)\},
\end{gather*}
with the obvious meaning of $\mc F^{\underline w-\an}(\N^{\mc O, \op},A)$. All the space defined above have a relative version over $\mc U$, and we will use the corresponding notation.
\begin{notation}
All our (algebraic) groups have been defined starting with lattices of rank $a$ or $a$ and $b$. We can generalise this definition to other ranks. If we want to stress the ranks we will add certain index. For example $\GL^{\mc O, b-1,a-1}$. These will be the relevant groups when we will consider the cusps of the minimal compactification of our Shimura variety.
\end{notation}
We have the following
\begin{prop} \label{prop: omega circ repr gen}
Let $\chi$ in $\mc W_{b,a}(\underline w)^\circ(K)$ or in $\mc W_a(\underline w)^\circ(K)$ be a $\underline w$-analytic character. Locally for the étale topology on $\mathfrak S_G^{\tor,\rig}(\mc H p^n)(\underline v)$ the sheaf $\underline \omega^{\dagger\chi}_{\underline v,\underline w}$ is isomorphic to $L_{-w_0\chi}^{\underline w-\an, \circ}$. This isomorphism respects the action of $\I^{\mc O}$. An analogous result holds if $\chi$ is in $\mc W_{b,a}(\underline w)(K)$ or $\mc W_a(\underline w)(K)$, considering the étale topology on $\mathfrak S_G^{\tor,\rig}(\mc Hp)_{\Iw}(\underline v)$. Similarly, if $\mc U$ is an open affinoid in $\mc W_{b,a}(\underline w)^\circ$ or $\mc W_a(\underline w)^\circ$, with universal character $\chi_{\mc U}^{\un}$, the sheaf $\underline \omega^{\dagger\chi_{\mc U}^{\un}}_{\underline v,\underline w}$ is étale locally isomorphic to $L_{-w_0\chi_{\mc U}^{\un}}^{\underline w-\an, \circ}$. Analogously for $\mc W_{b,a}(\underline w)$ or $\mc W_a(\underline w)$.
\end{prop}
The Hasse invariants descend to the minimal compactification, allowing us to define the space $\mathfrak S^{\ast}_G(\mc H)(\underline v)$. For any $s=0, \ldots, a$ and for any cusp label $[\gamma] \in C_s(\mc H)$ we have the cusp $\mathfrak S_{G_s}(\mc H_{[\gamma]})(\underline v)$ of $\mathfrak S^{\ast}_G(\mc H)(\underline v)$.

After inverting $p$ there is no problem in adapting the definition of $C_s(\mc H)$, $H^+_{[\gamma]}$, $\Gamma_{[\gamma]}$ etc to the case of level $\mc Hp^n$. Moreover we also have the space $\mathfrak S_{G}^{\ast,\rig}(\mc Hp^n)(\underline v)$, defined using the analytification of the Shimura variety over $K$, and a morphism $\xi \colon \mathfrak S_{G}^{\tor,\rig}(\mc Hp^n)(\underline v) \to \mathfrak S_{G}^{\tor,\rig}(\mc H)(\underline v)$ which is a finite map. We write
\[
\eta \colon \mathfrak S_{G}^{\tor,\rig}(\mc Hp^n)(\underline v) \to \mathfrak S_{G}^{\ast,\rig}(\mc H)(\underline v)
\]
for the composition of $\xi$ with the map to the minimal compactification. We want to describe the stalks of the sheaf $\eta_\ast \underline \omega^{\dagger\chi}_{\underline v,\underline w}$. 

Recall that Huber \cite[(1.1.11)]{huber} defined a fully faithful functor from the category of rigid varieties to adic spaces; we write $\mathfrak S_{G}^{\ast,\mr{ad}}(\mc Hp^n)(\underline v)$, $\mathfrak S_{G}^{\tor,\mr{ad}}(\mc Hp^n)(\underline v)$, etc. for the corresponding adic spaces. This morphism induces an isomorphism of topoi, and we shall see $\eta_\ast \underline \omega^{\dagger\chi_{\mc U}^{\un}}_{\underline v,\underline w}$ as a sheaf on the rigid site or the adic site according to the situation. Note that one can take fiber products between spaces in the image of this functor. 
\begin{prop}\label{prop: stalk min circ}
Let $\mc U$ be an open affinoid in $\mc W_{b,a}(\underline w)^\circ$ or $\mc W_a(\underline w)^\circ$, with universal character $\chi_{\mc U}^{\un}$. Let $[\gamma] \in C_s(\mc H)$ be a cusp label and let $x=\mr{Spa}(K,K^+)$ a geometric adic point of $\mathfrak S_{G_s}^{\mr{ad}}(\mc H_{[\gamma]})(\underline v)$. The completion of the stalk of $\eta_\ast \underline \omega^{\dagger\chi_{\mc U}^{\un}}_{\underline v,\underline w}$ is canonically isomorphic to 
\begin{align*}
\prod_{[\gamma']}\left(\prod_{[h] \in H^+_{[\gamma']}/\G_{[\gamma']}, \mr{rk}(h)\geq s} {\Homol^0(\hat{\mathfrak Z}^{\mr{ad}}_{[\gamma'],x},\mc L(h)\otimes\underline \omega^{\dagger\chi_{\mc U}^{\un}}_{\underline v,\underline w})}^{\G_{[\gamma']}(h)}\right),
\end{align*}
where $\G_{[\gamma']}(h)$ is the subgroup of $\G_{[\gamma']}$ which stabilises $h$, $[\gamma']$ are the cusp labels for $\mathfrak S_{G}^{\tor,\rig}(\mc Hp^n)(\underline v) $ which  map to $[\gamma]$ under $\xi$, and $\hat{\mathfrak Z}^{\mr{ad}}_{[\gamma'],x}$ is the  adic space associated with $\hat{\mc Z}^{\mr{rig}}_{[\gamma'],x}$ (defined through a formal model such as the one in \cite[\S 8.2.1]{AIP}). An analogous result holds if $\mc U$ is in $\mc W_{b,a}(\underline w)$ or $\mc W_a(\underline w)$.

If moreover a section $f$ in the stalk of $\eta_\ast \underline \omega^{\dagger\chi_{\mc U}^{\un}}_{\underline v,\underline w}$ is already defined over an open $\mathfrak{U}$, we have that the coefficients belong to 
$$ \Homol^0(\hat{\mathfrak Z}^{\mr{ad}}_{[\gamma']}\times \mathfrak{U},\mc L(h)\otimes\underline \omega^{\dagger\chi_{\mc U}^{\un}}_{\underline v,\underline w}).$$

In particular, this proves the $q$-expansion principle for (families of) overconvergent modular forms: $f$ is $0$ if and only if its $q$-expansion (on each connected component) is $0$.
\end{prop}
\begin{proof}
First of all, note that the local description of the toroidal and minimal compactification given in Proposition \ref{stalkmin} once one defines $\mathfrak{M}_{[\gamma]}(\underline v)$, $\mathfrak{M}_{\sigma}(\underline v)$ taking the adic fiber of the formal models defined as in \cite[Proposition 8.2.1.2]{AIP}. 

One has a stratification of the boundary also in the adic setting, as Huber's functor is an equivalence of categories if restricted to adic space locally of finite type over a field.

Note that the sheaf $\underline \omega^{\dagger\chi_{\mc U}^{\un}}_{\underline v,\underline w}$ is defined using a torsor $\tilde{\mathfrak{IW}} \rightarrow  \mathfrak S_{G}^{\tor,\rig}(\mc Hp^n)(\underline v)$ which is locally trivial (see \cite[\S 4.5]{AIP} and \cite[\S 2.3]{pel}). As the Iwahori is a union of open balls, Huber adification functor  behaves well on $\tilde{\mathfrak{IW}}$ too and we get 
\begin{align}\label{eq:tor_su_min_stratoadic }
\left(\tilde{\mathfrak{IW}} \times_{\mathfrak S_{G}^{\tor,\rig}(\mc Hp^n)(\underline v)} \overline{ \mathfrak{M}}^{\mr{ad}}_{\sigma}(\underline v) \right)/ \Gamma_{[\gamma]}\rightarrow \mathfrak{Z}^{\mr{ad}}_{[\gamma]} \rightarrow \mathfrak S_{G_s}^{\mr{ad}}(\mc H_{[\gamma]})(\underline v);
\end{align}
similarly on the completion $\widehat{\overline{\mathfrak{M}}}_{[\gamma],\mc C_{[\gamma]}}$.

Once we specialise (the power of) the universal abelian variety at the point corresponding to $x$ we get 
\[ 
\Homol^0(\hat{\mathfrak Z}^{\mr{rig}}_{[\gamma'],x},\mc L(h)\otimes\underline \omega^{\dagger\chi_{\mc U}^{\un}}_{\underline v,\underline w}) \cong  \Homol^0(\hat{\mathfrak Z}^{\mr{rig}}_{[\gamma'],x},\mc L(h))\hat{\otimes} L^{\underline w-\an, \circ}_{-w_0 \chi_{\mc U}^{\un}}.
\] 
(Note that the isomorphism is not canonical but it depends on $x$.)

Taking into account that the morphism $\xi \colon \mathfrak S_{G}^{\tor,\rig}(\mc Hp^n)(\underline v) \to \mathfrak S_{G}^{\tor,\rig}(\mc H)(\underline v)$ is finite \'etale, the proof proceeds exactly as in Proposition~\ref{stalkJs}, taking the fibers of the above diagrams at the point corresponding to $x$. 
(Note that the stalks of the structural sheaf of a rigid variety are Henselian \cite[Lemma 2.1.1]{deJongderPut}.)

To conclude, if the Fourier--Jacobi expansion at $x$ of a form vanishes, it must vanish in an open neighbourhood of  $x$, and hence on the whole connected component containing $x$.
\end{proof}
\subsection{\texorpdfstring{$p$-adic Siegel morphism}{p-adic Siegel morphism}} \label{subsec: p-adic Siegel morphism}
We now study the Siegel morphism for families of $p$-adic modular forms.

Let $\Spm(A) \subset \mc W_{a}(\underline w)^\circ$ or $\Spm(A) \subset \mc W_{b,a}(\underline w)^\circ$ be a fixed affinoid admissible open of the weight space. With a little abuse of notation we continue to write $\eta$ for the morphism
\[
\eta \colon \mathfrak S^{\tor, \rig}_G(\mc H p^n)(\underline v) \times \Spm(A) \to \mathfrak S^{\ast, \rig}_G(\mc H)(\underline v) \times \Spm(A).
\]
Let $q=0,\ldots, a$ be an integer. We have the closed immersion
\[
\bigcup_{s=q+1}^a \bigcup_{[\gamma] \in C_s} \mathfrak S_{G_s}^{\rig}(\mc H_{[\gamma]})(\underline v) \times \Spm(A) \hookrightarrow \mathfrak S_{G}^{\ast,\rig}(\mc H)(\underline v) \times \Spm(A)
\]
given by the stratification of the minimal compactification. We will write $\mc I^q$ for the corresponding sheaf of ideals and we define $\mc J^q \colonequals \eta^\ast \mc I^q$. By definition $\mc J^q$ is the ideal defined by the coarse stratification of the boundary of the toroidal stratification. We now describe the stalks of the sheaf $\eta_\ast ( \underline \omega^{\dagger\chi}_{\underline v,\underline w} \otimes \mc J^q)$, as in Proposition~\ref{prop: stalk min circ}, using the same notation.
\begin{prop}\label{prop: stalk min circ with cusp}
Let $\mc U$ be an open affinoid in $\mc W_{b,a}(\underline w)^\circ$ or $\mc W_a(\underline w)^\circ$, with universal character $\chi_{\mc U}^{\un}$. Let $[\gamma] \in C_s(\mc H)$ be a cusp label and let $x$ be a geometric adic point of $\mathfrak S_{G_s}^{\mr{ad}}(\mc H_{[\gamma]})(\underline v)$. The completion of the stalk of $\eta_\ast (\underline \omega^{\dagger\chi_{\mc U}^{\un}}_{\underline v,\underline w} \otimes \mc J^q)$ is canonically isomorphic to 
\begin{align*}
\prod_{[\gamma']}\left(\prod_{[h] \in H^+_{[\gamma']}/\G_{[\gamma']}, \mr{rk}(h)\geq s- q} {\Homol^0(\hat{\mathfrak Z}^{\mr{ad}}_{[\gamma'],x},\mc L(h)\otimes\underline \omega^{\dagger\chi_{\mc U}^{\un}}_{\underline v,\underline w})}^{\G_{[\gamma']}(h)}\right).
\end{align*}
An analogous result holds if $\mc U$ is in $\mc W_{b,a}(\underline w)$ or $\mc W_a(\underline w)$.
\end{prop}
Let $\iota_{[\gamma]} \colon \mathfrak S_{G_s}^{\ast, \rig}(\mc H_{[\gamma]})(\underline v) \times \Spm(A) \to \mathfrak S_{G}^{\ast,\rig}(\mc H)(\underline v) \times \Spm(A)$ be the natural morphism. In the following lemma $\mc I^{q-1}_{[\gamma]}$ has the same definition as $\mc I^{q-1}$, but for the Shimura variety $\mathfrak S_{G_s}^{\ast, \rig}(\mc H_{[\gamma]})(\underline v)$.
\begin{lemma} \label{lemma: ex sec}
There is an exact sequence of sheaves on $\mathfrak S^{\ast, \rig}_G(\mc H)(\underline v) \times \Spm(A)$
\begin{equation} \label{eq: ex sec first}
0 \to \eta_\ast(\underline \omega_{\underline v,\underline w}^{\dagger\chi_{\mc U}^{\un}} \otimes \mc J^0) \to \eta_\ast( \underline \omega_{\underline v,\underline w}^{\dagger\chi_{\mc U}^{\un}} \otimes \mc J^q) \to \bigoplus_{[\gamma] \in C_1} \eta_\ast(\underline \omega_{\underline v,\underline w}^{\dagger\chi_{\mc U}^{\un}} \otimes  \eta^\ast \iota_{[\gamma],\ast} \mc I^{q-1}_{[\gamma]})
\end{equation}
\end{lemma}
\begin{proof}
Using exact sequence
\[
0 \to \mc I^0 \to \mc I^q \to \bigoplus_{[\gamma] \in C_1} \iota_{[\gamma],\ast} \mc I^{q-1}_{[\gamma]} \to 0
\]
we obtain the exact sequence, on $\mathfrak S^{\tor, \rig}_G(\mc H)(\underline v) \times \Spm(A)$,
\[
0 \to \mc J^0 \to \mc J^q \to \bigoplus_{[\gamma] \in C_1} \eta^\ast \iota_{[\gamma],\ast} \mc I^{q-1}_{[\gamma]} \to 0
\]
We can now tensor with $\underline \omega_{\underline v,\underline w}^{\dagger\chi_{\mc U}^{\un}}$ and push-forward to $\mathfrak S^{\ast, \rig}_G(\mc H)(\underline v) \times \Spm(A)$ via $\eta$. The only non trivial thing to check is exactness on the left, that can be checked on completed stalks using Proposition~\ref{prop: stalk min circ with cusp}.
\end{proof}
\begin{defi}
Let $\mc U = \Spm(A)$ and $q$ be as above. Let $\chi_{\mc U}^{\un}$ be the universal character associated to $\mc U$. We define
\[
\M^q_{\chi_{\mc U}^{\un}}(\mc Hp^n, K) \colonequals \Homol^0(\mathfrak S^{\ast, \rig}_G(\mc H)(\underline v) \times \Spm(A), \eta_\ast(\underline \omega_{\underline v,\underline w}^{\dagger\chi_{\mc U}^{\un}}\otimes \mc J^q)).
\]
Given a modular form $f$ in $\M^a_{\chi_{\mc U}^{\un}}(\mc Hp^n, K)$ (that is just a global section of $\underline \omega_{\underline v,\underline w}^{\dagger\chi_{\mc U}^{\un}}$) with $f \neq 0$, we define its corank $\cork(f)$ as the smallest integer $q$ such that $f \in \M^q_{\chi_{\mc U}^{\un}}(\mc Hp^n, K)$.
\end{defi}
Given a positive integer $s=1,\ldots,a$ and a weight $\chi$ (not necessarily integral) we define the notion of $\chi$ being of corank $s$ in the obvious way, exactly as in the case of integral weights. This gives us the closed subspaces $\mc W_{a}(\underline w)^{\circ,s} \subseteq \mc W_{a}(\underline w)^\circ$ and $\mc W_{a,b}(\underline w)^{\circ,s} \subseteq \mc W_{b,a}(\underline w)^\circ$ given by weights of corank at least $s$. It is convenient to set $\mc W_{a}(\underline w)^{\circ,0} = \mc W_{a}(\underline w)^{\circ}$ and $\mc W_{b,a}(\underline w)^{\circ,0} = \mc W_{b,a}(\underline w)^{\circ}$. If $s > 0$, we have that $\mc W_{a}(\underline w)^{\circ,s}$ and $\mc W_{b,a}(\underline w)^{\circ,s}$ are weight spaces in $d(a-s+1)$ variables (recall that $d$ is the degree of the relevant totally real field). The map $k \mapsto k'$ defined at the beginning of Subsection~\ref{subsec: sieg morph} extends to a morphism $\cdot ' \colon \mc W_{a}(\underline w)^{\circ,s} \to \mc W_{a-1}(\underline w)^{\circ,s-1}$ or $\cdot ' \colon \mc W_{b,a}(\underline w)^{\circ,s} \to \mc W_{b-1,a-1}(\underline w)^{\circ,s-1}$. We will use an analogous notation for the weight spaces $\mc W_{a}(\underline w)^{s} \subseteq \mc W_{a}(\underline w)$ and $\mc W_{b,a}(\underline w)^{\circ,s} \subseteq \mc W_{b,a}(\underline w)$ of level $\mc H$.

Let $\mc U = \Spm(A)$ be an affinoid admissible open of $\mc W_{a}(\underline w)^{\circ,s}$ or $\mc W_{a,b}(\underline w)^{\circ,s}$, with corresponding universal character $\chi_{\mc U}^{\un}$. Over $\mathfrak S_G^{\tor,\rig}(\mc Hp^n)(\underline v) \times \Spm(A)$ we can define the sheaf $\underline \omega_{\underline v,\underline w}^{\dagger \chi_{\mc U}^{\un}}$. All the results proved for the modular sheaves stay true for $\underline \omega_{\underline v,\underline w}^{\dagger \chi_{\mc U}^{\un}}$. In particular we have the formal model $\underline {\mathfrak w}_{\underline v,\underline w}^{\dagger \chi_{\mathfrak U}^{\un}}$, where $\mathfrak U$ is a formal model of $\mc U$.

Let $s=1,\ldots,a$ be a positive integer and let $[\gamma] \in C_1$ be a cusp label. Let $\mc U = \Spm(A)$ be an affinoid admissible open of $\mc W_{a}(\underline w)^{\circ,s}$ or $\mc W_{b,a}(\underline w)^{\circ,s}$, with corresponding universal character $\chi_{\mc U}^{\un}$. We write $\mc V=\Spm(B)$, with universal character $\chi_{\mc V}^{\un}$, for the affinoid admissible open of $\mc W_{a-1}(\underline w)^{\circ,s-1}$ or $\mc W_{b-1,a-1}(\underline w)^{\circ,s-1}$ given by the image of $\mc U$ via $\chi \mapsto \chi'$. We write $\iota_{[\gamma]} \colon \mathfrak S_{G_1}^{\ast,\rig}(\mc H_{[\gamma]})(\underline v) \times \Spm(A) \to \mathfrak S_{G}^{\ast,\rig}(\mc H)(\underline v) \times \Spm(A)$ for the natural morphism. We let $\eta_A$ and $\eta_B$ be the morphisms
\begin{gather*}
\eta_A \colon \mathfrak S_{G}^{\tor,\rig}(\mc Hp^n)(\underline v) \times \Spm(A)\to \mathfrak S_{G}^{\ast,\rig}(\mc H)(\underline v) \times \Spm(A), \\
\eta_B \colon \mathfrak S_{G_1}^{\tor,\rig}(\mc H_{[\gamma]}p^n)(\underline v) \times \Spm(B) \to \mathfrak S_{G_1}^{\ast,\rig}(\mc H_{[\gamma]})(\underline v) \times \Spm(B)
\end{gather*}
\subsubsection*{Some representation theory} Before proving the key proposition on the overconvergent Siegel morphism, we need to generalise certain results of representation theory of $\GL$ to analytic representations of the Iwahori subgroup. First of all some notation.

Let $0\leq r \leq a$ and define $\Sop^{\mc O,r,\op}$ to be, in the unitary case, the set of couples of matrices in $\N^{\mc O,\op}$ such that the non-zero elements of the first component are in the lower left $(b-r) \times r$-block and the non-zero elements of the second component are in upper right $(a-r) \times r$-block. The definition of $\Sop^{\mc O, r, \op}$ in the symplectic case is similar considering the lower $r \times a-r$-block. Recall the various notation, as for example $\T^{\mc O,b,a}$ and $\T^{\mc O,r}$, introduced before Proposition~\ref{prop: omega circ repr gen}.
\begin{lemma} \label{lemma: invariants}
Let $\mc U = \Spm(A)$. In the unitary case, write $\chi_{\mc U}^{\un}$ as $\chi_1 \chi_2 \chi_3$ according to the decomposition $\T^{\mc O,b,a}=\T^{\mc O,r}\times \T^{\mc O,b-r,a-r}\times \T^{\mc O,r}$, such that each $\chi_i$ is $\underline w_i$-analytic in the obvious sense. We have then
\[
{(L_{\chi_{\mc U}^{\un}}^{\underline w-\an, \circ})}^{\Sop^{\mc O, r,\op}} = L_{\chi_{1}}^{\underline w_1-\an, \circ}\hat{\otimes}_{A}L_{\chi_{2}}^{\underline w_2-\an, \circ}\hat{\otimes}_{A}L_{\chi_{3}}^{\underline w_3-\an, \circ} 
\]
where the right hand side is a representation of $\I^{\mc O,r} \times \I^{\mc O,b-r,a-r} \times \I^{\mc O,r,\op} $. In the symplectic case, write $\chi_{\mc U}^{\un}$ as $\chi_1 \chi_2$ according to the decomposition $\T^{\mc O,a}=\T^{\mc O,r}\times \T^{\mc O,a-r}$, we have then
\[
{(L_{\chi_{\mc U}^{\un}}^{\underline w-\an, \circ})}^{\Sop^{\mc O, r,\op}} = L_{\chi_{1}}^{\underline w_1-\an, \circ}\hat{\otimes}_{A}L_{\chi_{2}}^{\underline w_2-\an, \circ}
\]
as a representation of $\I^{\mc O,r} \times \I^{\mc O,a-r} $.
\end{lemma}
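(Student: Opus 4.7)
The plan is to identify $L_{\chi_{\mc U}^{\un}}^{\underline w-\an, \circ}$ with $\underline w$-analytic $A$-valued functions on the opposite unipotent $\N^{\mc O, \op}_{\underline w}$ via the Iwahori decomposition, and then exploit the block structure of $\N^{\mc O, \op}$ induced by the standard parabolic of $\GL^{\mc O}$ with Levi $\GL^{\mc O, r} \times \GL^{\mc O, b-r, a-r} \times \GL^{\mc O, r}$ (in the unitary case) in order to peel off the $\Sop^{\mc O, r, \op}$-factor.

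First I would verify that any $f \in L_{\chi_{\mc U}^{\un}}^{\underline w-\an, \circ}$ is determined by its restriction $\tilde f$ to $\N^{\mc O, \op}_{\underline w}$, using the Iwahori decomposition $\I^{\mc O} = \N^{\mc O, \op} \cdot \B^{\mc O}(\Z_p)$ together with the transformation rule under $\T^{\mc O}_{\underline w}$ and the $\underline w$-analyticity. Under this identification, right translation by $s \in \Sop^{\mc O, r, \op} \subset \N^{\mc O, \op}$ on $f$ becomes right translation of $\tilde f$ as a function on $\N^{\mc O, \op}$.

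Next I would establish the decomposition. Every $n \in \N^{\mc O, \op}$ admits a unique $\underline w$-analytic factorization $n = s \cdot n_L$ with $s \in \Sop^{\mc O, r, \op}$ and $n_L$ in the block-diagonal opposite unipotent $\N_L \colonequals \N^{\mc O, r, \op} \times \N^{\mc O, b-r, a-r, \op} \times \N^{\mc O, r, \op}$ associated to the three Levi factors. The key observation is that $\Sop^{\mc O, r, \op}$ is normal in $\N^{\mc O, \op}$, being the unipotent radical of a parabolic containing the opposite Borel. Hence for $s' \in \Sop^{\mc O, r, \op}$ we have $n s' = s \cdot (n_L s' n_L^{-1}) \cdot n_L$ with $n_L s' n_L^{-1}$ again in $\Sop^{\mc O, r, \op}$ and ranging over the whole group as $s'$ does. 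Therefore $\tilde f$ is $\Sop^{\mc O, r, \op}$-invariant if and only if $\tilde f(s \cdot n_L)$ is independent of $s$, i.e.\ descends to a $\underline w$-analytic function on $\N_L$.

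Finally I would match the tensor factors: since the three factors of $\N_L$ commute among themselves and the torus decomposition $\T^{\mc O,b,a}=\T^{\mc O,r}\times \T^{\mc O,b-r,a-r}\times \T^{\mc O,r}$ intertwines with this factorization, a $\underline w$-analytic function on $\N_L$ transforming by $\chi_1 \chi_2 \chi_3$ factors as a tensor product of three $\underline w_i$-analytic functions transforming by $\chi_i$, yielding the claimed isomorphism of $\I^{\mc O, r} \times \I^{\mc O, b-r, a-r} \times \I^{\mc O, r, \op}$-representations. The symplectic case runs identically with two Levi factors instead of three. The main technical point will be the bookkeeping to verify that the three $\underline w_i$-analyticity conventions and the residual actions of the three Iwahori factors on the invariants match exactly the ones appearing in the definition of the $L_{\chi_i}^{\underline w_i-\an, \circ}$; this is a direct matrix computation once the embedding of each Levi Iwahori into $\I^{\mc O}$ is unwound.
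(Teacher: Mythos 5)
Your plan is correct and follows essentially the same route as the paper: there too one identifies $\Sop^{\mc O,r,\op}$-invariant elements with $\underline w$-analytic functions on $\N^{\mc O,\op}$ and restricts to the block-diagonal complement, the paper merely compressing your factorization-plus-normality argument into the assertion that this restriction map is injective ``and hence surjective for dimension reasons''. Only carry out the deferred bookkeeping with care in the unitary case: under the paper's standalone conventions the last block factor is $\N^{\mc O,r}$ rather than $\N^{\mc O,r,\op}$, which is exactly why the third tensor factor is a representation of $\I^{\mc O,r,\op}$.
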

\begin{proof}
Consider a function $f \in \mc F^{\underline w-\an}(\N^{\mc O, \op},A)$. Begin invariant by $\Sop^{\mc O, r,\op}$ means that $n \star f = f$. In particular $n \star f (1) = f(1n)=f(1)$. Hence the map 
\begin{gather*}
{(L_{\chi_{\mc U}^{\un}}^{\underline w-\an, \circ})}^{\Sop^{\mc O, r,\op}} \rightarrow \\
\mc F^{\underline w_1-\an}(\N^{\mc O,r, \op},A) \times \mc F^{\underline w_2-\an}(\N^{\mc O, b-r,a-r,\op},A) \times \mc F^{\underline w_3-\an}(\N^{\mc O,r},A)
\end{gather*}
is injective and hence surjective for dimension reasons and it respects the action of $\I^{\mc O,r} \times \I^{\mc O,b-r,a-r} \times \I^{\mc O,r,\op} $. The proof in the symplectic case is similar.
\end{proof}

We are now ready to define and study the Siegel morphism.
\begin{prop} \label{prop: identification sheaves}
Let $q=1,\ldots,a$ be an integer with $q \leq s$. If $q \neq 1$ we have a natural isomorphism of sheaves on $\mathfrak S_{G_1}^{\ast,\rig}(\mc H)(\underline v) \times \Spm(B)$
\[
\iota_{[\gamma]}^\ast \left ( \eta_{A,\ast} ( \underline \omega_{\underline v,\underline w}^{\dagger \chi_{\mc U}^{\un}} \otimes \mc J^q ) \right) \cong \eta_{B,\ast} (\underline \omega_{\underline v,\underline w}^{\dagger \chi_{\mc V}^{\un}} \otimes \mc J^{q-1}_{[\gamma]}).
\]
Moreover, we have a natural isomorphism of sheaves on $\mathfrak S_{G_1}^{\ast,\rig}(\mc H)(\underline v) \times \Spm(B)$
\[
\iota_{[\gamma]}^\ast \left ( \eta_{A,\ast}( \underline \omega_{\underline v,\underline w}^{\dagger \chi_{\mc U}^{\un}} \otimes \mc J^1 ) \right) \cong \eta_{B,\ast} ( \underline \omega_{\underline v,\underline w}^{\dagger \chi_{\mc V}^{\un}} \otimes \mc J^{0}_{[\gamma]} ) \otimes \mc O_{\mathbb B(0,1)_B},
\]
where $\mathbb B(0,1)_B$ is the closed unit ball of radius $1$ over $B$.
\end{prop}
\begin{proof}
First of all we compare the completion of the stalks of the two sheaves at any geometric adic point of $\mathfrak S_{G_s}^{\ad}(\mc Hp^n_{[\gamma]})(\underline v)$. Let $x$ be such a point. As it will not affect the idea of the proof, we shall ignore the morphism $\xi$ and we suppose that our sheaves are already defined on the toroidal compactification of the variety without level at $p$. Recall the notation from Proposition~\ref{prop: SiegelSheaves}: we identify the set of cusp labels $[\gamma_1]$ for $\mathfrak S_{G}^{\ast,\rig}(\mc H)(\underline v)$ which belong to the closure of $\mathfrak S_{G_1}^{\rig}(\mc H_{[\gamma]})(\underline v)$ with the set of cusp label $[\gamma_2]$ for $\mathfrak S_{G_1}^{\ast,\rig}(\mc H_{[\gamma]})(\underline v)$. For each pair of corresponding cusp labels $[\gamma_1]$ and $[\gamma_2]$, using the description of the Fourier--Jacobi expansion given in Proposition~\ref{prop: stalk min circ with cusp} and that the ideal of the boundary is generated by the $q^h$ for $h$ with maximal rank, we obtain:
\begin{align*}
\widehat{\iota_{[\gamma]}^\ast \eta_{A,\ast} \left (  \underline \omega_{\underline v,\underline w}^{\dagger \chi_{\mc U}^{\un}} \otimes \mc J^q \right)}_x = &\prod_{[h] \in H^+_{[\gamma_1]}/\G_{[\gamma_1]}, s>\mr{rk}(h)\geq s- q} {\Homol^0(\hat{\mathfrak Z}^{\mr{ad}}_{[\gamma_1],x},\mc L(h)\otimes\underline \omega^{\dagger\chi_{\mc U}^{\un}}_{\underline v,\underline w})}^{\G_{[\gamma_1]}(h)},\\
\eta_{B,\ast} \widehat{( \underline \omega_{\underline v,\underline w}^{\dagger \chi_{\mc V}^{\un}} \otimes \mc J^{q-1}_{[\gamma]})}_{x}  = & \prod_{[h] \in H^+_{[\gamma_2]}/\G_{[\gamma_2]}, \mr{rk}(h)\geq s-q} {\Homol^0(\hat{\mathfrak Z}^{\mr{ad}}_{[\gamma_2],x},\mc L(h)\otimes\underline \omega^{\dagger\chi_{\mc U}^{\un}}_{\underline v,\underline w})}^{\G_{[\gamma_2]}(h)}.
\end{align*}
We recall from Proposition~\ref{prop: stalk min circ}:
\[
{\Homol^0(\hat{\mathfrak Z}^{\mr{ad}}_{[\gamma_1],x},\mc L(h)\otimes\underline \omega^{\dagger\chi_{\mc U}^{\un}}_{\underline v,\underline w})} \cong \Homol^0(\hat{\mathfrak Z}^{\mr{ad}}_{[\gamma_1],x},\mc L(h))\hat{\otimes}L^{\underline w-\an, \circ}_{-w_0 \chi_{\mc U}^{\un}}.
\]
Remark that for each cusp label $[\gamma_1] \in C_r(\mc H)$ and $h  \in H^+_{[\gamma_1]}$ (notation as in the proof of  {\it loc. cit.}) the group $\G_{[\gamma_1]}(h)$ (even at $p^n$-full level) is dense in $\Sop^{\mc O,s,\op}$, which allows one to use Lemma~\ref{lemma: invariants}. Hence as in Proposition~\ref{prop: SiegelSheaves} we have 
\begin{align*}
{\Homol^0(\hat{\mathfrak Z}^{\mr{ad}}_{[\gamma_1],x},\mc L(h)\otimes\underline \omega^{\dagger\chi_{\mc U}^{\un}}_{\underline v,\underline w})}^{\G_{[\gamma_1]}(h)} = & {\Homol^0(\hat{\mathfrak Z}^{\mr{ad}}_{h,x},\tilde{\mc L}(h)\otimes\underline \omega^{\dagger\chi_{\mc U}^{\un}}_{\underline v,\underline w})}^{\G_{[\gamma_1]}(h)} \\
= & {\Homol^0(\hat{\mathfrak Z}^{\mr{ad}}_{h,x},\tilde{\mc L}(h)\otimes\underline \omega^{\dagger\chi_{\mc V}^{\un}}_{\underline v,\underline w})}^{\G_{[\gamma_2]}(h)} \\
= & {\Homol^0(\hat{\mathfrak Z}^{\mr{ad}}_{[\gamma_2],x},\mc L(h)\otimes\underline \omega^{\dagger\chi_{\mc V}^{\un}}_{\underline v,\underline w})}^{\G_{[\gamma_2]}(h)}.
\end{align*}
Hence the completions of the two stalks are the same. 

We now explain why the stalks are the same, without taking the completion. First of all note that if $x$ lies in the open part $\mathfrak S_{G_1}^{\ad}(\mc H_{[\gamma]})(\underline v)$ of our cusp then the above products reduce to the single term corresponding to $h = 0$. 

Given an element of the stalk of ${\eta_{B,\ast} ( \underline \omega_{\underline v,\underline w}^{\dagger \chi_{\mc V}^{\un}} \otimes \mc J^{q-1}_{[\gamma]}})_{x}$ it is just a section of 
\[
{\Homol^0({\mathfrak Z}^{\mr{ad}}_{[\gamma_2]} \times {\mathfrak{U}},\underline \omega^{\dagger\chi_{\mc V}^{\un}}_{\underline v,\underline w})}^{\G_{[\gamma_2]}(h)}, 
\]
for $\mathfrak{U}$ a small enough open neighbourhood of $x$. Using the above chain of isomorphisms, one can define an element of ${\Homol^0({\mathfrak Z}^{\mr{ad}}_{[\gamma_1]}\times \mathfrak{U},\underline \omega^{\dagger\chi_{\mc U}^{\un}}_{\underline v,\underline w})}^{\G_{[\gamma_1]}(h)}$ which is then an element in the stalk $\iota_{[\gamma]}^\ast \eta_{A,\ast} ( \underline \omega_{\underline v,\underline w}^{\dagger \chi_{\mc U}^{\un}} \otimes \mc J^q)_x $. Note also that they define overconvergent forms on the open Shimura variety of smaller genus by the same reason as in Remark \ref{rem:FaltingSiegel}.

For general $x$, any $f$ in $\eta_{B,\ast} ( \underline \omega_{\underline v,\underline w}^{\dagger \chi_{\mc V}^{\un}} \otimes \mc J^{q-1}_{[\gamma]})_{x}$ is defined over suitable open $\mathfrak{U}$. Let $y$ be a point of $\mathfrak{U}$ lying in the open part of the boundary. As in the previous step, we can define 
\[
f_{\mathfrak{U}} \in  {\Homol^0({\mathfrak Z}^{\mr{ad}}_{[\gamma_2]} \times {\mathfrak{U}},\underline \omega^{\dagger\chi_{\mc V}^{\un}}_{\underline v,\underline w})}^{\G_{[\gamma_2]}(h)},
\]
which corresponds to
\[
\tilde{f}_{\mathfrak{U}} \in {\Homol^0({\mathfrak Z}^{\mr{ad}}_{[\gamma_1]}\times \mathfrak{U},\underline \omega^{\dagger\chi_{\mc U}^{\un}}_{\underline v,\underline w})}^{\G_{[\gamma_1]}(h)},
\]
which is a overconvergent modular forms on the open variety. We have to show that $\tilde{f}_{\mathfrak{U}}$ extend to the whole $\mathfrak{U}$, so that it will be an element of $\iota_{[\gamma]}^\ast \eta_{A,\ast} ( \underline \omega_{\underline v,\underline w}^{\dagger \chi_{\mc U}^{\un}} \otimes \mc J^q)_x $. 
But, using the isomorphism on the completions, we know that Fourier--Jacobi expansion $\tilde{f}$ is the one corresponding to $f$. But $f$ has no negative definite terms, hence so does $\tilde{f}$, which extends.
This completes the proof.
\end{proof}
\begin{rmk} \label{rmk: reason prod tens}
The reason for the appearance of the tensor product in the case $q=1$ is intuitively the following. Let us suppose for simplicity the we are in the symplectic case. A $p$-adic weight $\chi$ is of corank $1$ is and only if its last component is parallel and the morphism $\chi \mapsto \chi'$ forgets this last component. In particular we can recover from $\chi'$ all the components of $\chi$ but the last one, that is parallel: this gives the extra variable. If $q>1$ then we can recover $\chi$ from $\chi'$.
\end{rmk}
Let $q > 1$. For every $[\gamma] \in C_1$, we have isomorphisms
\begin{gather*}
\eta_{A,\ast}(\underline \omega_{\underline v,\underline w}^{\dagger\chi_{\mc U}^{\un}} \otimes \eta^\ast \iota_{[\gamma],\ast} \mc I^{q-1}_{[\gamma]}) \cong \iota_{[\gamma],\ast} \iota_{[\gamma]}^\ast (\eta_{A,\ast}(\underline \omega_{\underline v,\underline w}^{\dagger\chi_{\mc U}^{\un}} \otimes  \eta^\ast \iota_{[\gamma],\ast} \mc I^{q-1}_{[\gamma]})) \cong \\
\iota_{[\gamma],\ast} \iota_{[\gamma]}^\ast (\eta_{A,\ast} (\underline \omega_{\underline v,\underline w}^{\dagger \chi_{\mc U}^{\un}} \otimes \mc J^q)).
\end{gather*}
Using exact sequence \eqref{eq: ex sec first} and Proposition~\ref{prop: identification sheaves} we get an exact sequence
\begin{equation} \label{eq: ex sec sec}
0 \to \eta_{A,\ast}(\underline \omega_{\underline v,\underline w}^{\dagger\chi_{\mc U}^{\un}} \otimes \mc J^0) \to \eta_{A,_\ast}( \underline \omega_{\underline v,\underline w}^{\dagger\chi_{\mc U}^{\un}} \otimes \mc J^q) \to \bigoplus_{[\gamma] \in C_1} \iota_{[\gamma],\ast} \eta_{B,\ast} (\underline \omega_{\underline v,\underline w}^{\dagger \chi_{\mc V}^{\un}} \otimes \mc J^{q-1}_{[\gamma]})
\end{equation}
and we have an analogous result if $q=1$.

Taking global sections we get morphisms
\begin{equation} \label{eq: sieg morph q}
\M^q_{\chi_{\mc U}^{\un}}(\mc Hp^n,K) \to \bigoplus_{[\gamma] \in C_1} \M^{q-1}_{\chi_{\mc V}^{\un}}(\mc H_{[\gamma]}p^n,K)
\end{equation}
and
\begin{equation} \label{eq: sieg morph 1}
\M^1_{\chi_{\mc U}^{\un}}(\mc Hp^n,K) \to \bigoplus_{[\gamma] \in C_1} \M^{0}_{\chi_{\mc V}^{\un}}(\mc H_{[\gamma]}p^n,K) \otimes_B B\langle x \rangle.
\end{equation}
These are the so called $p$-adic Siegel morphisms. We are going to show that they are surjective.
\begin{prop} \label{prop: ex seq sieg}
Let $q$, $\Spm(A)$, and $\Spm(B)$, etc be as above. We have that exact sequence \eqref{eq: ex sec sec} is exact on the right, giving, if $q > 1$, the exact sequence of sheaves
\[
0 \to \eta_{A,\ast} (\underline \omega_{\underline v,\underline w}^{\dagger \chi_{\mc U}^{\un}} \otimes \mc J^0) \to \eta_{A,\ast}( \underline \omega_{\underline v,\underline w}^{\dagger \chi_{\mc U}^{\un}} \otimes \mc J^q) \to \bigoplus_{[\gamma] \in C_1} \iota_{[\gamma],\ast} \left( \eta_{B,\ast} (\underline \omega_{\underline v,\underline w}^{\dagger \chi_{\mc V}^{\un}} \otimes \mc J^{q-1}_{[\gamma]}) \right) \to 0
\]
If $q=1$ we have an analogous result taking into account the extra factor $\mc O_{\mathbb B(0,1)_B}$ in the last morphism.
\end{prop}
\begin{proof}
Taking into account Proposition~\ref{prop: stalk min circ with cusp}, this is again a computation using Fourier--Jacobi expansion as in the proof of Proposition~\ref{prop: identification sheaves}.
\end{proof}
Let $\mathfrak X$ be a flat, integral, normal, quasi-projective formal scheme over $\Spf(\mc O_K)$ that is topologically of finite type and such that its rigid fiber is an affinoid. In the appendix of \cite{AIP} the notion of a formal flat Banach sheaf over $\mathfrak{X}$ is defined. Let $\mathfrak F$ be such a sheaf. By \cite[Section~A.2.2]{AIP} we can attach to $\mathfrak F$ a Banach sheaf $\mc F= \mathfrak F^{\rig}$ on $\mathfrak X^{\rig}$. We now prove a general results about the vanishing of the cohomology of a small, formal, and flat Banach sheaf (see \cite[Definition~A.1.2.1]{AIP} for the definition of a small formal Banach sheaf).
\begin{prop} \label{prop: coho banach 0}
Let $\mathfrak X$ and $\mathfrak F$ be as above and suppose moreover that $\mathfrak F$ is small. Let $\mc X \colonequals \mathfrak X^{\rig}$ and $\mc F \colonequals \mathfrak F^{\rig}$. Then
\[
\Homol^1(\mc X, \mc F) = 0.
\]
\end{prop}
\begin{proof}
Recall that the natural morphism from sheaf cohomology to Čech cohomology is always (on any site) an isomorphism in degree $1$, see for example \cite[Corollary~3.4.7]{tamme}. In particular it is enough to prove that
\[
\check{\Homol}^1(\mc X, \mc F) = 0.
\]
The open covers of $\mc X$ made by affinoids are a base of the topology, in particular we can prove that
\[
\check{\Homol}^1(\set{\mc U_i}_{i \in I}, \mc F) = 0,
\]
where $\set{\mc U_i}_{i \in I}$ is an open cover of $\mc X$ and each $\mc U_i$ is an affinoid. By quasi-compactness of $\mc X$ we can assume $I$ to be finite. Taking a finite number of admissible blow-ups of $\mathfrak X$ we get a formal schemes $\tilde{\mathfrak X}$ such that the cover $\set{\mc U_i}_{i \in I}$ comes from a cover $\set{\mathfrak U_i}_{i \in I}$ of $\tilde{\mathfrak X}$ made by affine formal schemes. Note that $\tilde{\mathfrak X}$ satisfies the same properties of $\mathfrak X$ and $\tilde{\mathfrak X}^{\rig}=\mc X$. Since $\mathfrak F$ is a flat formal Banach sheaf, we have, by \cite[Lemma~A.2.2.2]{AIP},
\[
\mc F(\mc U_i) = \mathfrak F(\mathfrak U_i)[1/p]
\]
for all $i \in I$. We conclude since, by \cite[Theorem~A.1.2.2]{AIP} the Čech complex associated to $\set{\mathfrak U_i}_{i \in I}$ is exact.
\end{proof}
The morphism $\eta$ extends to a morphism between the formal models, denoted with the same symbol
\[
\eta \colon \mathfrak S_G^{\tor}(\mc Hp^n)(\underline v) \to \mathfrak S_G^{\ast}(\mc H)(\underline v).
\]
We write $\mathfrak I^0$ for the ideal corresponding to the boundary of $\mathfrak S_G^{\ast}(\mc H)(\underline v) \times \mathfrak U$, where $\mathfrak U$ is a formal model of $\mc U$.
\begin{coro} \label{coro: sieg morph surj}
The $p$-adic Siegel morphisms \eqref{eq: sieg morph 1} and \eqref{eq: sieg morph q} are surjective.
\end{coro}
\begin{proof}
Using Proposition~\ref{prop: ex seq sieg}, it is enough to prove that
\[
\Homol^1(\mathfrak S_G^{\ast,\rig}(\mc H)(\underline v),\eta_{A,\ast} \underline \omega_{\underline v,\underline w}^{\dagger \chi_{\mc U}^{\un}} \otimes \mc I^0) = 0.
\]
We have that $\mathfrak S_G^{\ast}(\mc H)(\underline v)$ is a flat, integral, normal, quasi-projective formal scheme over $\Spf(\mc O_K)$ that is topologically of finite type and such that its rigid fiber is an affinoid. Moreover $\eta_\ast \underline {\mathfrak w}_{\underline v,\underline w}^{\dagger \chi_{\mathfrak U}^{\un}} \otimes \mathfrak I^0$ is a flat formal Banach sheaf over $\mathfrak S_G^{\ast}(\mc H)(\underline v)$ that is small by \cite[Section~8 and Appendix]{AIP}. The corollary follows by Proposition~\ref{prop: coho banach 0}.
\end{proof}
Let $q,s = 1, \ldots, a$ be integers with $q \leq s$. Let $\Spm(A)$ be an affinoid admissible open of $\mc W_a(\underline w)^{\circ, s}$ or $\mc W_{b,a}(\underline w)^{\circ, s}$ with universal character $\chi_{\mc U}^{\un}$. Let $\Spm(B)=\mc V$ be the image of $\mc U$ under $\cdot'$ and let $\chi_{\mc V}^{\un}$ be the universal character of $\mc V$. We have proved that there is an exact sequence, if $q>1$,
\[
0 \to \M^0_{\chi_{\mc U}^{\un}}(\mc Hp^n,K) \to \M^q_{\chi_{\mc U}^{\un}}(\mc Hp^n,K) \to \bigoplus_{[\gamma] \in C_1} \M^{q-1}_{\chi_{\mc V}^{\un}}(\mc H_{[\gamma]}p^n,K) \to 0
\]
If $q=1$ we have the exact sequence
\[
0 \to \M^0_{\chi_{\mc U}^{\un}}(\mc Hp^n,K) \to \M^1_{\chi_{\mc U}^{\un}}(\mc Hp^n,K) \to \bigoplus_{[\gamma] \in C_1} \M^0_{\chi_{\mc V}^{\un}}(\mc H_{[\gamma]}p^n,K) \otimes_B B \langle x \rangle \to 0
\]
To obtain a similar result for level $\mc H$ (and the weight spaces $\mc W_a(\underline w)$ or $\mc W_{b,a}(\underline w)$) it is enough to take invariant for the action of the group $\B^{\mc O}(\Z/p^n\Z)$ (this groups takes into account the difference between level $\mc Hp^n$ and $\mc H$). Indeed, since we are in characteristic $0$, the group $\B^{\mc O}(\Z/p^n\Z)$ has no cohomology and we obtain the following theorem.
\begin{teo} \label{teo: fund ex sec}
Let $q,s = 1, \ldots, a$ be integers with $q \leq s$. Let $\Spm(A)$ be an affinoid admissible open of $\mc W_a(\underline w)^{s}$ or $\mc W_{b,a}(\underline w)^{s}$ with universal character $\chi_{\mc U}^{\un}$. Let $\Spm(B)=\mc V$ be the image of $\mc U$ under $\cdot'$ and let $\chi_{\mc V}^{\un}$ be the universal character of $\mc V$. If $q>1$ there is an exact sequence
\begin{equation} \label{eq: fund ex sec}
0 \to \M^0_{\chi_{\mc U}^{\un}}(\mc H,K) \to \M^q_{\chi_{\mc U}^{\un}}(\mc H,K) \to \bigoplus_{[\gamma] \in C_1} \M^{q-1}_{\chi_{\mc V}^{\un}}(\mc H_{[\gamma]},K) \to 0.
\end{equation}
If $q=1$ we have the exact sequence
\begin{equation} \label{eq: fund ex sec 1}
0 \to \M^0_{\chi_{\mc U}^{\un}}(\mc H,K) \to \M^1_{\chi_{\mc U}^{\un}}(\mc H,K) \to \bigoplus_{[\gamma] \in C_1} \M^0_{\chi_{\mc V}^{\un}}(\mc H_{[\gamma]},K) \otimes_B B \langle x \rangle \to 0.
\end{equation}
\end{teo}

\section{Eigenvarieties for non-cuspidal systems of automorphic forms} \label{sec: eigen}
In this section $\mc H$ is a compact open subgroup of $G(\m A_{F_0,f})$, not necessarily neat and $p$ satisfies the hypotheses of Section \ref{sec: p-adic Section}.
\subsection{The eigenvariety machinery} \label{subsec: eigen machinery}
We briefly recall Buzzard's eigenvarieties machinery  \cite{buzz_eigen}.
Let $A$ be a Noetherian Banach algebra over $\Q_p$ and $M$ a Banach module over $A$. In particular $M$ is equipped with a norm $\vert \phantom{a} \vert$. We say that another norm $\vert \phantom{a} \vert'$ is equivalent to $\vert \phantom{a} \vert$ if they induce the same topology.
\begin{defin}
We say that $M$ is $\mr{Pr}$ if there exist a Banach module $N$ and an equivalent norm on $M \oplus N$ such that \begin{align*}
M \oplus N \cong \widehat{\bigoplus_I} A 
\end{align*}
as Banach modules.
\end{defin}
The term $\mr{Pr}$ could remind the reader of projective module. But this is misleading: even if a $\mr{Pr}$ module satisfies the universal property of lifting surjective morphism it is not necessarily projective. This is because in the category of Banach $A$-modules epimorphisms are the morphisms with dense image.

\begin{defin} Let $U$ be a continuous $A$-linear operator on $M$, we say that is completely continuous (or compact) if $U$ can be written as a limit (for the operator norm) of continuous operators of finite rank.
\end{defin}

 We are now ready to recall the eigenvariety machinery. We are given as input:
\begin{itemize}
\item a reduced, equidimensional affinoid $\mathrm{Spm}(A)$,
\item a $\mr{Pr}$ module $M$ over $A$,
\item a commutative endomorphism algebra $\mathbf{T}$ of $M$ over $A$,
\item a compact operator $U$ of $\mathbf{T}$.
\end{itemize}
With these objects we can define a formal series $P(T) \colonequals \det(1 -TU \vert M) \in A\left\{\left\{ T \right\}\right\}$. 
\begin{defi} 
The spectral variety $\mathcal{Z}$ associated to $(A,M,U)$ is the closed subspace of $\mathrm{Spm}(A) \times \mathbb{A}^1$ defined by $P(T)=0$.
\end{defi}
A point $(x,\lambda)$ belongs to $\mathcal{Z}$ if and only if there exists $m \in M\otimes_A \overline{\kappa(x)}$ such that $U(m)= \lambda^{-1}m$.

To the above data Buzzard associates a rigid analytic space $\mr{pr}:\mc E \rightarrow \mathcal{Z}$ with an admissible cover $\mc E_{\alpha}$ which satisfies the following properties:
\begin{itemize}
\item $\mc E$ is equidimensional of dimension $\mr{dim}(A)$.

\item Let $\kappa$ be the structural morphism $\mathcal{E} \rightarrow \mathrm{Spm}(A)$. The space $\mathcal{E}$ parametrizes system of eigenvalues appearing in $M$; indeed, each point in $\kappa^{-1}(x)$ corresponds a system of eigenvalues for $\mathbb{T}$ inside $M\otimes_A \overline{\kappa(x)}$ which is of finite slope for $U$.

\item The map $\mr{pr}$ is finite. The map $\kappa$ is locally finite.

\item The module $M$ defines a coherent sheaf $\mathcal{M}$. The fiber $\mathcal{M}_{(x,\lambda)}$ is the generalised eigenspace for $\lambda^{-1}$ inside $M\otimes_A \overline{\kappa(x)}$. 

\item The image of  $\mc E_{\alpha} $ in $\mr{Spm}(A)$ is affinoid. 

\item Over $\kappa(\mc {E}_{\alpha})$ we have a factorisation $P(T)=P_1(T)P_2(T)$ with $P_1(T)$ a polynomial with constant term $1$ and coprime with $P_2(T)$. 

\item Over  $\kappa(\mc {E}_{\alpha})$ we can decompose $M=M_1\oplus M_2$. If $P_1^*(T)=T^{\mr{deg}(P_1)}P_1(T^{-1})$ then $P_1^*(U)$ is zero on $M_1$ and invertible on $M_2$. Moreover $\mr{rank}_A M_1 = \mr{deg}(P_1)$.
\end{itemize}

We define several Hecke operators at $p$. In the symplectic case we follow the notation of \cite[\S 3.6]{hida_control}; let $\pfrak$ be a prime above $p$ in $F_0$ and $p$ and for $0\leq j \leq a-1$ define matrices $\alpha_j = \mr{diag}[1,\ldots,1,\overbrace{p,\ldots,p}^{j}] \in \mr{GL}_a(\mc O_{F_{0,\pfrak}})$. For $1 \leq j \leq a$ we define $\beta_j = \vierkant{\alpha_j}{0}{0}{w_0p^2\alpha_j^{-1}w_0}$ and $\beta_0 = \vierkant{\alpha_0}{0}{0}{p\alpha_0}$. 
If $U_{a,a}$ denotes the unipotent of the parabolic $P_{a,a}$, we define 
\begin{align*}
\U_{\pfrak,j} = \frac{[U_{a,a}(\mc O_{F_{0,\pfrak}})\beta_j U_{a,a}(\mc O_{F_{0,\pfrak}})]}{p^{d_{\pfrak}(a-j)(a+1)}}
\end{align*} 
 for $d_{\pfrak}$ the degree of the extension $F_{0,\pfrak}/\Q_p$. We let $\m U_{G,p}=\bigotimes_{\pfrak}\Z_p[U_{\pfrak,j}]_j$ be the Hecke algebra generated by these operators and define $\U_{G,p}=\prod_{\pfrak}\prod_{j=0}^{a-1}\U_{\pfrak,j}$.

We define similarly $\m U_{G,p}$ and $U_{G,p}$ for $G$ unitary following \cite[\S 6]{hida_control}.

All these Hecke operators act naturally on the space of families of automorphic forms, respect the integral structure and the filtration given by the corank. If one prefers the analytic formulation of Hecke operators, for classical weights $\kappa$ one have to multiply the double coset action on forms by $\kappa(\alpha_j)^{-1}$. It is well-known that $\U_{G,p}$ is completely continuous on the space of overconvergent forms and families \cite[\S 4.2]{pel}

We define now the Hecke algebra. Let $\mathfrak{l}$ be a prime ideal of $F_0$ above $l \neq p$, we define \begin{align*}
\m T_{G,\mathfrak{l}} = \Z_p[ G(\mc O_{F_{0,\mathfrak{l}}}) \setminus G(F_{0,\mathfrak{l}})/G(\mc O_{F_{0,\mathfrak{l}}})  ].
\end{align*}
When $G$ is symplectic, $\m T_{G,\mathfrak{l}}$ is generated by the image of diagonal matrices
\[
[\varpi_{\mathfrak{l}},\ldots,\varpi_{\mathfrak{l}},1,\dots,1] \in \GL_a(F_{0,\mathfrak{l}})
\]
which are embedded in $\GSp_{2a}$ as before. 

If $G$ is unitary and $\mathfrak{l}$ is split in $F$, then $G(F_{0,\mathfrak{l}})\cong \GL_{a+b}(F_{0,\mathfrak{l}})\times \m G_m(\Q_l) $ and $\m T_{G,\mathfrak{l}}$ is generated by the same matrices. If $\mathfrak{l}$ is inert, $G(F_{0,\mathfrak{l}})$ is contained in $\GL_{a+b}(F_{\mathfrak{l}})$ and generated by the same diagonal matrices.

Let $N$ be a prime-to-$p$ integer containing all prime numbers which are norms of prime ramified in $F$ or for which $G$ is not quasi-split. The abstract Hecke algebra of prime-to-$Np$ level is then \begin{align*}
 \m T_G^{(Np)} = \otimes'_{\mathfrak{l} \nmid Np} \m T_{G,\mathfrak{l}}.
\end{align*}

It naturally acts on the space of overconvergent forms and families as defined in \cite[\S 4.1]{pel}.

Let $q \leq s$. We shall denote by $\mathcal{E}_{a,s}^q(\underline v,\underline w)$ the eigenvariety associated with:
\begin{itemize}
\item for $\mc U= \mathrm{Spm}(A)$ we choose  an open affinoid inside $\mc W_a(\underline w)^{s}$ or $\mc W_{b,a}(\underline w)^{s}$,
\item for $M$, if $\mc H$ is neat we choose 
\[
\M_{a,s}^q\colonequals \Homol^0(\mathfrak S_{G}^{\ast,\rig}(\mc H)(\underline v) \times \Spm(A),\eta_{A,\ast} \underline \omega_{\underline v,\underline w}^{\dagger \chi_{\mc U}^{\un}} \otimes \mc I^q).
\]
Otherwise, we choose $\mc H' \subset \mc H$ with $\mc H'$ neat and we take 
\begin{align*}
\M_{a,s}^q\colonequals {\Homol^0(\mathfrak S_{G}^{\ast,\rig}(\mc H')(\underline v) \times \Spm(A),\eta_{A,\ast} \underline \omega_{\underline v,\underline w}^{\dagger \chi_{\mc U}^{\un}} \otimes \mc I^q)}^{\mc H / \mc H'}.
\end{align*}
Note that the notation is slightly different from above, but we prefer to stress in this section the genus $a$ and the corank $s$ of the weights of our families of modular forms.
\item for $\mathbf{T}$ we choose $\m T_{a,s}^{q}\colonequals \mr{Im}(\m T_{G}^{(Np)}\otimes{\m U_{G,p}} \rightarrow \mr{End}_{A}(\m M_{a,s}^q))$,
\item $U=\U_{G,p}$.
\end{itemize}

Before we can use Buzzard's machinery we need the following proposition.
\begin{prop}\label{moduliPR}
The module $\M_{a,s}^q$ is $\mr{Pr}$ over $\mc U$.
\end{prop}
\begin{proof}
If $\mc H$ is neat we can apply \cite[Lemma 2.11]{buzz_eigen} to the exact sequences in Theorem~\ref{teo: fund ex sec}. Otherwise, it is enough to remark that $\M_{a,s}^q$ is a direct summand of ${\Homol^0(\mathfrak S_{G}^{\ast,\rig}(\mc H')(\underline v) \times \Spm(A),\eta_{A,\ast} \underline \omega_{\underline v,\underline w}^{\dagger \chi_{\mc U}^{\un}} \otimes \mc I^q)}$ (the action of the finite group  ${\mc H / \mc H'}$ is always diagonalisable in characteristic zero) which is $\mr{Pr}$ by the same argument as above.
\end{proof}
For each fixed $\underline w$, we have the eigenvarieties 
\begin{align*}
{\mc E}_{a,s}^q(\underline v,\underline w) \rightarrow \mc Z_{a,s}^q(\underline v,\underline w)
\end{align*} which are independent of $\underline v$ if $\underline v$ is small enough \cite[Lemma 5.6]{buzz_eigen}. Letting $\underline w$ go to infinity we can glue the different eigenvarieties \cite[Lemma 5.5]{buzz_eigen}.
\begin{defi}
The eigenvariety $\mathcal{E}_{a,s}^q$ for forms of corank at most $q$ over the weight space $\mc W_a^{s}$ or $\mc W_{b,a}^{s}$ of weights of corank at least $s$ is defined as
\begin{align*}
\mathcal{E}_{a,s}^q\colonequals \varinjlim_{v,w} \mathcal{E}_{a,s}^q(\underline v,\underline w).
\end{align*}
\end{defi}
We say that a point $x$ in $\mathcal{E}_{a,s}^q$ is classical if the system of eigenvalues associated with $x$ appears in the space of classical forms $\M_k(\mc H,\overline{\Z})$.
\begin{prop}
Classical points are Zariski dense in $\mathcal{E}_{a,s}^q$.
\end{prop}
\begin{proof}
Indeed from \cite[Theorem 6.7]{pel} we know that forms of small slope (w.r.t. the weight) are classical. The points satisfying the condition of {\it loc. cit.} are clearly Zariski dense in $\mc W_a$; we can then proceed as in \cite[Theorem 5.4.4]{urban_eigen}. 
\end{proof}
\subsection{Relations between different eigenvarieties} \label{subsec: relations}
We now want to analyse the relations between $\mathcal{E}_{a,s}^q$ when varying $a$, $q$ or $s$. We begin with a lemma;
\begin{lemma}\label{lemmasum}
Suppose that $M$ is an extension of two potentially ON-module $M_1$ and $M_2$ over $A$. Suppose that $M_1$ and $M_2$ are $U$-stable, then 
\begin{align*}
\det(1 -TU \vert M) = \det(1 -TU \vert M_1)\det(1 -TU \vert M_1).
\end{align*}
\end{lemma}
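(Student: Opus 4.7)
The plan is to choose a topological splitting of the extension, reduce $U$ to block-triangular form, and then invoke the standard multiplicativity of Fredholm determinants for such compact operators.

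First, I would obtain a decomposition $M \cong M_1 \oplus M_2$ as Banach $A$-modules (\emph{not} necessarily as $U$-modules). Such a splitting exists because $M_2$ is Pr: writing $M_2 \oplus N \cong \widehat{\bigoplus}_I A$, one can lift the standard orthonormal generators of $\widehat{\bigoplus}_I A$ through the composition $M \oplus N \twoheadrightarrow M_2 \oplus N \twoheadrightarrow M_2$ to produce a continuous $A$-linear section of $M \twoheadrightarrow M_2$, up to passing to an equivalent norm (which does not affect the characteristic power series).

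With the splitting fixed, $U$ preserves the first summand and therefore acts in block upper triangular form
\[
U = \begin{pmatrix} U_1 & \varphi \\ 0 & U_2 \end{pmatrix},
\]
where $U_1 \colonequals U|_{M_1}$, $U_2$ is the compact operator induced by $U$ on $M/M_1 \cong M_2$, and $\varphi \colon M_2 \to M_1$ is some continuous $A$-linear map. Both $U_1$ and $U_2$ are compact since compactness passes to closed invariant subspaces and to quotients by them.

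The identity $\det(1-TU|M) = \det(1-TU|M_1)\det(1-TU|M_2)$ is now a standard property of block-triangular compact operators in Serre's sense. Approximate $U_1$ and $U_2$ separately in operator norm by finite-rank operators $U_{1,n}$ and $U_{2,n}$; then the operator $U_n$ obtained by keeping $\varphi$ on the off-diagonal but replacing the diagonal entries by $U_{i,n}$ is a finite-rank approximation of $U$ whose characteristic polynomial factors by elementary linear algebra as $\det(1-TU_n|M) = \det(1-TU_{1,n}|M_1)\det(1-TU_{2,n}|M_2)$ (for block-triangular matrices in finite dimension this is immediate). Passing to the limit via continuity of the characteristic power series in the operator norm yields the claim. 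The only mild obstacle is the choice of an approximation respecting the block decomposition, which is why one approximates the two diagonal blocks separately rather than approximating $U$ as a whole.
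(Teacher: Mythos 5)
Your strategy --- produce a continuous $A$-linear splitting $M\cong M_1\oplus M_2$ from the lifting property of $\mathrm{Pr}$ modules, write $U$ in block upper-triangular form, and deduce the factorization of characteristic series --- is the standard route (the paper states the lemma without proof), and the splitting step is correct as you describe it, including the remark that passing to an equivalent norm does not change the characteristic series.

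The gap is in the final approximation step. The operator
\[
U_n=\begin{pmatrix} U_{1,n} & \varphi \\ 0 & U_{2,n}\end{pmatrix}
\]
is \emph{not} of finite rank: only its diagonal blocks are, while $\varphi$ is a priori just a continuous map, so $\det(1-TU_n)$ is not a polynomial and ``elementary linear algebra for block-triangular matrices in finite dimension'' does not apply to it. Asserting the factorization for this $U_n$ is essentially the lemma itself (a block-triangular compact operator whose diagonal blocks happen to be of finite rank), so the argument is circular exactly at the point where it was supposed to do the work; your closing remark anticipates the difficulty but resolves it by leaving $\varphi$ untouched, which is where it breaks. The repair is short: $\varphi=\mathrm{pr}_{M_1}\circ U\circ\iota_{M_2}$ is completely continuous, being $U$ composed with continuous maps, hence is itself a limit of finite-rank operators $\varphi_n$; with $U_n=\left(\begin{smallmatrix} U_{1,n} & \varphi_n\\ 0 & U_{2,n}\end{smallmatrix}\right)$ you get a genuine finite-rank block-triangular operator (its image lies in a finitely generated submodule of the form $V_1\oplus V_2$ stable under $U_n$), the determinant factors by finite-dimensional linear algebra, and continuity of the characteristic series in the operator norm finishes the proof. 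Alternatively, conjugate by $\operatorname{diag}(\epsilon,1)$: for $\epsilon\neq 0$ this leaves the characteristic series unchanged and replaces $\varphi$ by $\epsilon\varphi$, and letting $|\epsilon|\to 0$ reduces, again by continuity, to the block-diagonal case. Two smaller points you should make explicit: in the $p$-adic (Serre) setting ``compactness passes to invariant subspaces and quotients'' is not formal --- you need the continuous projection and section furnished by your splitting to exhibit $U_1=\mathrm{pr}_{M_1}\circ U\circ\iota_{M_1}$ and $U_2=\mathrm{pr}_{M_2}\circ U\circ\iota_{M_2}$ as limits of finite-rank operators; and the same splitting is what guarantees that $M$ itself is potentially orthonormalizable, so that $\det(1-TU\mid M)$ is well defined and insensitive to replacing the norm by an equivalent one.
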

\subsubsection*{Changing $q$ and $s$} We begin by letting $q$ vary. Let $q' < q \leq s $, we have a natural injection $ \M_{a,s}^{q'} \rightarrow  \M_{a,s}^q$ which induces by restriction a surjective map $ \m T_{a,s}^{q} \rightarrow \m T_{a,s}^{q'}$. In particular this gives us a closed immersion $\mathcal{E}_{a,s}^{q'} \rightarrow \mathcal{E}_{a,s}^q$.

We now vary $s$ too. Let $s' < s$ and $\mc U'$ be an open affinoid of $\mc W_a(\underline w)^{s'}$. (Everything is the same if $\mc U'$ in $\mc W_{b,a}(\underline w)^{s'}$.) Let $\mc U = \mc U' \times_{W_a(\underline w)^{s'}} \mc W_a(\underline w)^{s}$ and denote by $A$ resp. $A'$ the affinoid algebra for $\mc U$ resp. $\mc U'$. By definition of the sheaves $\omega^{\dagger\chi_{\mc U}^{\mr{un}}}_{\underline v,\underline w}$ and $\omega^{\dagger\chi_{\mc U'}^{\mr{un}}}_{\underline v,\underline w}$ we have 
\begin{align*}
\M_{a,s'}^q \otimes_{A'} A =\M_{a,s}^q. 
\end{align*}
This implies immediately that $\mathcal{E}_{a,s'}^{q} \times_{\mc W_{a}^{s'}} \mc W_{a}^{s} $ and $\mathcal{E}_{a,s}^{q}$ have the same closed points. This also implies that $\mc Z_{a,s}^q$ is the base change of $\mc Z_{a,s'}^q$ because the characteristic series for $\U_p$ is stable under base-change \cite[Lemma 2.13]{buzz_eigen}. But this does \emph{not} imply that $\mathcal{E}_{a,s'}^{q} \times_{\mc W_{a}^{s'}} \mc W_{a}^{s} $ and $\mathcal{E}_{a,s}^{q}$ are isomorphic, as non-reducedness issues could appear. Still, we can prove the following:
\begin{teo}\label{teo:glue}
The reduced eigenvarieties $\mathcal{E}_{a,q}^{q,\mr{red}}$ ($q=0,\ldots, a $) glue into a (non-equidimensional) eigenvariety $\mathcal{E}_{a}$ over $\mc W_a$ (resp. $\mc W_{a,b}$).
\end{teo}
To prove the theorem, we need to study the relation between our (reduced) eigenvarieties and other non-equidimensional eigenvarieties. In \cite{Hansen}, Hansen constructs a non necessarily equidimensional eigenvariety $\mathbb{E}:=\mathbb{E}_{\mc H}$ over $\mc W_{a}$ (resp. $\mc W_{a,b}$) starting from the method of Ash--Stevens. (A similar results, using Urban's construction, has been obtained by \cite{Xiang}.) 

The main theorem of this section will be easily deduced from the following:
\begin{teo}
We have a closed embedding $\iota_{\mr{H},q}:\mathcal{E}_{a,q}^{q,\mr{red}} \rightarrow \mathbb{E}$ compatible with the structural morphisms to the weights spaces and the closed immersions $\mc W_a^{q} \rightarrow \mc W_{a}$ (resp. $\mc W_{b,a}^{q} \rightarrow \mc W_{b,a}$).
\end{teo} 
\begin{proof}
Recall that the degeneracy of the BGG complex for $S^{\tor}_G(\mc H)$, proved in \cite{FaltingsChai, LanPolo}, gives us a Hecke equivariant injection 
\begin{align}\label{ESinj}
\Homol^0(S^{\tor}_G(\mc H),\omega^k) \hookrightarrow \Homol^d(S^{\tor}_G(\mc H), V_{\tilde{k}})
\end{align}
where $d$ is the length of the longest element  $w_0$ in the Weyl group, $\tilde{k}$ is $w_0(k+\rho)-\rho$ (for $\rho$ the half-sum of the positive roots) and $V_{\tilde{k}}$  is the local system associated with the representation of highest weight $\tilde{k}$.\\
To prove this theorem, it is enough to show that we can apply \cite[Theorem 5.1.6]{Hansen}. We take as $\mathfrak{D}_1$ the eigenvariety datum for $\mathcal{E}_{a,q}^{q}$ and as $\mathfrak{D}_2$ the one of \cite[Definition 4.3.2]{Hansen}. Hansen's uses the cohomology of certain complexes of overconvergent distributions, interpolating the cohomology of the local system $V_{\tilde{k}}$ over the weight space.\\
We need additional data (i)--(iii) to apply his theorem: we take the natural immersion $\mc W_a^{q} \rightarrow \mc W_{a}$ (resp. $\mc W_{b,a}^{q} \rightarrow \mc W_{b,a}$) for (i) and for (ii) the identity map of abstract Hecke algebra. 
For (iii), we define $\mathfrak{Z}^{\mr{cl}}$ the set of points $(\kappa,\alpha)$ of $\mathcal{Z}$ with $\kappa$ classical dominant and $v_p(\alpha)$ satisfies the bounds of \cite[Theorem 6.7]{pel} and \cite[Theorem 3.2.5]{Hansen}. This set is very Zariski dense. 

This ensures us that generalised eigenspace $\Homol^0(\mathfrak S^{\ast, \rig}_G(\mc H)(\underline v) \times \Spm(A), \eta_\ast\underline \omega_{\underline v,\underline w}^{\dagger\kappa}\otimes \mc I^q)[\U_{G,p}=\alpha^{-1}]$ consists of classical forms which injects in $\Homol^0(S^{\tor}_G(\mc H),\omega^{\kappa})[\U_{G,p}=\alpha^{-1}]$, and similarly for $\Homol^*(S^{\tor}_G(\mc H),V_{\tilde{k}})$ and its overconvergent counterpart. The required divisibility for the characteristic polynomial of the Hecke operators is implied by the Eichler--Shimura injection \ref{ESinj}. 
\end{proof}
\begin{proof}[Proof of Theorem \ref{teo:glue}]
The previous theorem tells us that $\bigcup_q \iota_{\mr{H},q}(\mathcal{E}_{a,q}^{q,\mr{red}})$ is a closed subspace of  $\mathbb{E}$. We define $\mathcal{E}_{a}$ to be this closed subvariety with its reduced structure.
\end{proof}

\subsubsection*{Changing $a$}
We are now interested in studying the complement of $\mathcal{E}_{a,s}^0$ in $\mathcal{E}_{a,q}^q$. Recall from Section \ref{subsec: p-adic Siegel morphism} that we have a surjective morphism:
\begin{align*}
\M_{a,s}^q \rightarrow \bigoplus_{C_1(\mc H p^n)} \M_{a-1,s-1}^{q-1} \:\;\;\;\; ( s > 1 ); \\
\M_{a,1}^1 \rightarrow \bigoplus_{C_1(\mc H p^n)} \M_{a-1,0}^{0}\otimes_{B} A \:\;\;\;\; ( s=1 ).
\end{align*}
\begin{prop}
Let $G=\GSp_{2a}$ and $G'=\GSp_{2a-2}$.The above morphisms induce a surjective map between $\m T_G^{(Np)} \otimes_{\Z_p} \Q_p$ and  $\m T_{G'}^{(Np)} \otimes_{\Z_p} \Q_p$. A similar result holds for $\GU_{a,b}$ and $\GU_{a-1,b-1}$.
\end{prop}
\begin{proof}
For ${\mr {GSp}_{2a}}_{/\Q}$ this is proven in \cite[Korollar 1]{HeckeSiegel} and for a general totally real field the proof is exactly the same. For unitary group the proof is similar. 
\end{proof}

\begin{prop}
Let $G=\mr{GSp}_{2a,/F_0}$ and $G'=\mr{GSp}_{2a-2/F_0}$. The Siegel morphism sends  $\U_{\pfrak,a}$ for $G$ to $\U_{\pfrak,a-1} $ for $G'$ and, for $0\leq j \leq a-1$, $\U_{\pfrak,j}$ to $p^{d_{\pfrak}j}\U_{\pfrak,j}$.
\end{prop}
\begin{proof}
We start with the symplectic case; let $m=a-j$, $M_{m \times j}(\mc O_{F_{0,\pfrak}})$ the set of matrices of size $m$ times $j$ with entries in $\mc O_{F_{0,\pfrak}}$ and $S_a(\mc O_{F_{0,\pfrak}})$ the set of $a$ times $a$ matrices with entries in $\mc O_{F_{0,\pfrak}}$ such that $w_0xw_0=^tx$. These are matrices symmetric w.r.t. the reflection along the anti-diagonal.

We have an explicit decomposition of the double coset  \cite[Proposition 3.5]{hida_control}:
\begin{align*}
U_{a,a}(\mc O_{F_{0,\pfrak}})\beta_j U_{a,a}(\mc O_{F_{0,\pfrak}}) = \bigsqcup_{u,x} U_{a,a}(\mc O_{F_{0,\pfrak}}) \beta_j U_u U_x
\end{align*}
where
\[
U_x = \vierkant{1}{x}{0}{1}, x \in {S_{a} (\mc O_{F_0,\pfrak})}/p\alpha_j^{-1}S_a(\mc O_{F_0,\pfrak})w_0 p\alpha_j^{-1} w_0
\]
and $U_u =\vierkant{V_u}{0}{0}{w_0 V_u w_0}$ with $V_u = \vierkant{1}{u}{0}{1}$ for $u \in M_{m \times j}(\mc O_{F_{0,\pfrak}}/p)$. Note that this latter is a set of representatives for $U_{\GL_a}(\mc O_{F_{0,\pfrak}})p\alpha_j^{-1}U_{\GL_a}(\mc O_{F_{0,\pfrak}})$.

As the Siegel morphism is equivariant for the action of $P_{a,1}$, we have
\begin{align*}
\Phi \left( F \vert 
\left( 
\begin{array}{cccc}
a_1 & a_2 & b_1 & b_2 \\
0   & a_3 & b_3 & b_4 \\
0   & c_3 & d_3 & d_2 \\
0   & 0   & 0   & d_1
\end{array}
\right) \right) = \Phi(F) \vert \left( 
\begin{array}{cc}
a_3 & b_3 \\
c_3 & d_3 
\end{array}
\right).
\end{align*}
We have that $\beta_j$ for $G$ is sent to $\beta_j$ for $G'$. Every $V_{u'} = \vierkant{1}{u}{0}{1}$ for $u' \in M_{(m-1) \times j}(\mc O_{F_{0,\pfrak}}/p)$ has exactly $p^{jd_{\pfrak}}$ preimages between the matrices $V_u = \vierkant{1}{u}{0}{1}$ with $u \in M_{m \times j}(\mc O_{F_{0,\pfrak}}/p)$. Each $x$ in the decomposition for $G'$ has exactly
\[
p^{d_{\pfrak}(2a-j)}=p^{d_{\pfrak}(a-j)(a+1)}/p^{d_{\pfrak}a(a-1-j)}
\]
counterimages which is the factor by which we divide in the definition of $\U_{\pfrak,j}$.
The unitary case is similar and left to the reader.
\end{proof}
If $F_0=\Q$, this has been proved in \cite[Theorem 1.1]{Dickson}.

Summing up, the two previous propositions and Lemma \ref{lemmasum} allow us to use \cite[Theorem 5.1.6]{Hansen} to obtain a closed immersion of $\mathcal{E}_{a-1,s-1}^{q-1}$ into $\mathcal{E}_{a,s}^q$. Note that $\U_{G,p}$ is not sent into $\U_{G',p}$; for example, in the symplectic case it is mapped to $p^{\frac{[F_0:\Q]a(a-1)}{2}}\U_{G',p}\prod_{\pfrak}\U_{\pfrak,a-1}$. (The construction of the spectral variety it is indeed not canonical, as it depends on the choice of a compact operator.) 


To conclude, recall that in Proposition \ref{moduliPR} we have seen that the Siegel morphism splits as morphism of Banach modules; it is then natural to ask the following. 
\begin{quest}
Can one choose this splitting to be Hecke equivariant? 
\end{quest}
A positive answer would not imply that the eigenvariety is disconnected but would hint to the fact that one should be able to define directly families of non-cuspidal forms inducing families of cusp forms from parabolic subgroups of $G$. Some instances of this parabolic induction for ordinary forms has been proven by \cite{skinn_urb} from $\GL_2$ to $\GU(2,2)$ but using a pullback from from $\GU(3,3)$. A direct construction not involving pullback formulas would be more interesting and of much more general use.
\begin{remark}
We want to point out that for each parabolic $P$ of $G$ we can define a $P$-ordinary projector. If $M$ is the Levi of $P$ and $\pi$ a cuspidal automorphic representation of $M$, there is, for weight big enough, a unique Eisenstein series $E(\pi)$ which is $P$-ordinary.
\end{remark}

It is known that for $\GL_2$ the critical $p$-stabilisation of a level $1$ Eisenstein series is a $p$-adic cusp form. A similar phenomenon appears also in higher genus.

\subsection*{An example} We consider now to the case of $\U(2,2)$ for $F/\Q$ a CM extension. 
We choose  $f_0 \in \rm S_k(\G_1(N))$ a cusp form which is ordinary at $p$. Let $f$ be its non-ordinary $p$-stabilisation. Suppose that $f$ is $\theta$-critical ({\it i.e.} it is in the image of the $p$-adic Maa\ss{}--Shimura operator $\theta^{k}$); we suppose that at the point on the eigencurve $\mc C$ corresponding   to $f$ there is a unique family $F$ passing through $f$. This is the case if $f$ is CM (which is conjecturally always the case): a deep results of Bella\"iche \cite[Theorem 2.16]{bellaiche} ensure us that the the eigencurve is smooth at $f$. Note that it is known that the structural morphism $\kappa: \mc C \rightarrow \mc W$ is not \'etale at this point.

Suppose now that we can define the $p$-adic Klingen--Eisenstein series $E(F)$ interpolating the classical Klingen--Eisenstein series as done in the ordinary case in \cite[Theorem 12.10]{skinn_urb}. We know \cite[(11.64)]{skinn_urb} that at a classical point $x$ the constant term at genus one cusp labels  is a suitably normalised multiple of $L(f_x,k-1)f_x$, being $f_x$ the form corresponding to $x$. Hence,  the constant term is divisible by the two-variable $p$-adic $L$-function for $F$ evaluated at $\kappa(x)-1$. We know that $L_p(f, j)=0$ for all $0\leq j \leq \kappa(x)-1$ \cite[Theorem 2]{bellaiche}. In particular, the generically non cuspidal $E(F)$ at the point corresponding to  $E(f)$ would degenerate to a cuspidal form. It is an interesting question to understand how the geometry of smaller eigenvarieties (in our case, the non-\'etalness of $\kappa$) influences the geometry of bigger eigenvarieties (in our case, the non-cuspidal eigenvariety for $\U(2,2)$).

\subsection{On a conjecture of Urban}
In \cite{Hansen}, Hansen constructs a non necessarily equidimensional eigenvariety $\mathbb{E}_{\mc H}$ using ideas of Ash--Stevens. A similar results, using Urban's construction, has been obtained by \cite{Xiang}. Their two constructions are different from the machinery of Buzzard (that we have used in this paper) and it is not clear which is the dimension of an irreducible component of these eigenvarieties. In \cite[Conjecture 5.7.3]{urban_eigen}, Urban made the following precise conjecture for the dimension of irreducible components.
\begin{conj}
Let $x$ be a point belonging to exactly one irreducible component of $\mathbb{E}_{\mc H}$ and let $\theta$ be the corresponding system of eigenvalues.  Define $d$ to be the number of consecutive cohomology degrees in which the system $\theta$ appears. Then the image of the irreducible components to which $x$ belongs in the weight space is of codimension $d-1$.
\end{conj}
Urban has shown this conjecture for the cuspidal irreducible components. In general the number $d$ is not very easy to compute. Using work of Harder \cite{Harder}, we calculate this number for $\mathrm{GSp}_4$. Under mild hypothesis on the relation between irreducible components of $\mathbb{E}_{\mc H}$ and $\mc E_{2}$ we shall be able to prove Urban conjecture.

Let $S$ be the Siegel variety for ${\GSp_4(\Z)}$, denote by $S^{\mr{BS}}$ its Borel--Serre compactification, $\iota : S \rightarrow S^{\mr{BS}}$ the open immersion and by $\partial S$ the boundary of this map.  Let $(k_1,k_2)$ be the highest weight of a non zero algebraic representation of $\mr{Sp}_4$ and $\mc M_{k_1,k_2}$ the corresponding local system on $S$. Recall that we have the long exact sequence of cohomology:
\begin{gather*}
\cdots \rightarrow \Homol^{\bullet-1}(\partial S,\mc M_{k_1,k_2})\stackrel{\delta^{\bullet-1}}{\rightarrow} \\
\Homol_c^{\bullet}( S,\mc M_{k_1,k_2})\stackrel{i^{\bullet}}{\rightarrow} \Homol^{\bullet}( S,\mc M_{k_1,k_2})\stackrel{r^{\bullet}}{\rightarrow} \Homol^{\bullet}(\partial S,\mc M_{k_1,k_2})\rightarrow \cdots,
\end{gather*}
where $\Homol_c^{\bullet}(S,\mc M)= \Homol^{\bullet}(S,\iota_{!}\mc M)$. The cohomology of the boundary splits as 
\begin{align*}
\Homol^{\bullet}(\partial S,\mc M_{k_1,k_2}) = r^{\bullet}(\Homol^{\bullet}( S,\mc M_{k_1,k_2})) \oplus \Ker(\delta^{\bullet});
\end{align*}
we shall call the first term the {\it Eisenstein cohomology} and by the second term the {\it compactly supported Eisenstein cohomology}. We define also the interior cohomology: \begin{align*}
\Homol_{!}^{\bullet}( S,\mc M_{k_1,k_2})=\mr{Im}(i^{\bullet}). 
\end{align*}
Faltings--Chai proved an Eichler--Shimura morphism for Siegel forms:
\begin{align*}
\Homol^0(S,\underline \omega^{k_1,k_2}) \hookrightarrow \Homol^3(S, \mr{Sym}^{k_1-k_2}(\Q^2)\otimes \mr{det}(\Q^2)^{k_2-3}).
\end{align*}
Let $x$ be a system of eigenvalues for the Hecke algebra acting on $\Homol^{\bullet}( S,\mc M_{k_1,k_2})$; we shall say that $x$ is {\it Eisenstein} if $x$ corresponds to a system of eigenvalues in $\mc E_{2}$ and the $x$-eigenspace $\Homol^{\bullet}( \partial S,\mc M_{k_1,k_2})[x] \neq 0$. 

The classification of the boundary cohomology in terms of the weight has been given in \cite{Harder} using the spectral sequence coming from the stratification of $\partial S$. In the notation of \cite{Harder} we have $n_1= k_1-k_2 $ and $n_2=k_2-3$ and we define   $p(k)=\dagger$  if $n_2$ is odd and $p(k)=\star$ otherwise.
From \cite[page 156]{Harder} we know in which degree  the cohomology $\Homol^{\bullet}( \partial S,\mc M_{k_1,k_2})$ is concentrated:
\begin{itemize}
\item if $p(k)=\dagger$ and $n_1 \neq 0$, the boundary cohomology vanishes;
\item if $p(k)=\dagger$ and $n_1 = 0$ the boundary cohomology is concentrated in degrees $2$ and $3$;
\item if $p(k)=\star$ and $k_1 > k_2 > 0$ the boundary cohomology vanishes;
\item if $p(k)=\star$ and $k_1=k_2 \neq 0$ the boundary cohomology vanishes;
\item if $p(k)=\star$ and $ n_2=0$ but $n_1 >0$ than we have cohomology in degrees $2$ and $3$;
\item if $p(k)=\star$ and $n_1=n_2=0$ the cohomology is in degrees $0$, $2$, $3$, and $5$.
\end{itemize}
\begin{teo}
Let $\mc I$ be an irreducible component of $\mc E_{2}$; suppose there exists a classical point $x \in \mc I(\C_p)$ corresponding either to a Klingen Eisenstein series or a Siegel Eisenstein series of weight different from $(3,3)$. Then Urban's conjecture holds for all components $\mathbb{I}$ of $\mathbb{E}_{\GSp_4(\Z)}$ which satisfies $\mathbb{I}^{\mr{red}}\cong \mc I^{\mr{red}}$.
\end{teo}
\begin{remark}
We need to assume that an irreducible component of our eigenvariety is an irreducible component of $\mathbb{E}_{\GSp_4(\Z)}$ as this is not a priori clear.
\end{remark}
\begin{proof}
We check case by case the previous list.

Suppose that $k_1 > k_2$; our point is then in $\mc E_{2,1}^1$ and there are no consecutive cohomology group where this system appears. Urban's conjecture says that the irreducible component containing $x$ has the same dimension as the total weight space, which is indeed the case.

Let now $p(k)=\dagger$, $n_1=0$ and $k_2 \neq 3$. We refer to \cite[\S 2.5]{Harder}. We have boundary cohomology only if $k_2$ is even; in this case all, the filtered pieces of $\Homol^2( \partial S,\mc M_{k_1,k_2}))$ corresponds to the $\Homol^1$  with coefficient $\Sym^{k_2-2}$ (if we are looking at the Klingen parabolic) or $\mr{Sym}^{2k_2-2}$ (Siegel parabolic) and the $\Homol^0$ for the same sheaves (which vanishes). \\
The filtered pieces of $\Homol^3(\partial S,\mc M_{k_1,k_2}))$ are the $\Homol^1$ of the modular curve for forms of weight $k_2$ (if we are looking at the Klingen parabolic) or $2k_2-4$ (Siegel parabolic) and the $\Homol^0$ of modular curves with coefficients $\C$ and $k_2-3$. 

Suppose now that $x$ belongs to $\mc E_{2,1}^1$ so that it correspond to a system of eigenvalues of a cusp form. This case is studied in \cite[\S 2.5.1]{Harder} and the only contribution  to $\Homol_c^{\bullet}( S,\mc M_{k_1,k_2})$ is in degree $3$. Then $x$ contributes only to $\Homol^{3}(S,\mc M_{k_1,k_2})$ by Poincaré duality. The dimension of $\mc I$ is the conjectured one, namely $2$.

Suppose now that $x$ belongs to $\mc E_{2,2}^2$ but not to $\mc E_{2,2}^1$, so it is a classical parallel weight Siegel Eisenstein series. Summing up \cite[\S 2.5.1-4]{Harder}, as the weight is not $(3,3)$, we have contribution in two consecutive degrees ($2$ and $3$).  In this case Urban's conjecture predicts the codimension to be $1$ which is indeed the case.
\end{proof}
\bibliographystyle{amsalpha}
\bibliography{biblio}
\end{document}